\newtheorem{thm}{Theorem}[section]
\newtheorem{cor}[thm]{Corollary}
\newtheorem{lem}[thm]{Lemma}
\newtheorem{prop}[thm]{Proposition}
\theoremstyle{definition}
\newtheorem{defn}[thm]{Definition}
\theoremstyle{remark}
\newtheorem{rem}[thm]{Remark}
\newtheorem{ex}[thm]{Example}
\newcommand{\CC}{\mathbb{C}}    
\newcommand{\RR}{\mathbb{R}}    
\newcommand{\Ela}{\mathbb{E}\mathrm{la}} 
\newcommand{\HH}{\mathbb{H}}    
\newcommand{\TT}{\mathbb{T}}    
\newcommand{\Sym}{\mathbb{S}}   
\newcommand{\VV}{\mathbb{V}}    
\newcommand{\Sn}[1]{\mathrm{S}_{#1}}        
\newcommand{\Hn}[1]{\mathcal{H}_{#1}}       
\newcommand{\SL}{\mathrm{SL}}   
\newcommand{\OO}{\mathrm{O}}    
\newcommand{\SO}{\mathrm{SO}}   
\newcommand{\id}{\mathrm{I}}    
\newcommand{\slc}{\mathfrak{sl}}            
\newcommand{\bxi}{\pmb{\xi}}    
\newcommand{\eee}{\pmb{e}}      
\newcommand{\vv}{\pmb{v}}       
\newcommand{\ww}{\pmb{w}}       
\newcommand{\xx}{\pmb{x}}       
\newcommand{\ff}{\mathbf{f}}                
\newcommand{\bg}{\mathbf{g}}                
\newcommand{\bj}{\mathbf{j}}                
\newcommand{\bQ}{\mathbf{Q}}                
\newcommand{\btheta}{\pmb{\theta}}          
\newcommand{\rp}{\mathrm{p}}                
\newcommand{\rh}{\mathrm{h}}                
\newcommand{\ba}{\mathbf{a}}
\newcommand{\bb}{\mathbf{b}}
\newcommand{\bd}{\mathbf{d}}
\newcommand{\be}{\mathbf{e}}
\newcommand{\bt}{\mathbf{t}}
\newcommand{\bv}{\mathbf{v}}
\newcommand{\bq}{\mathbf{q}}
\newcommand{\bE}{\mathbf{E}}    
\newcommand{\bH}{\mathbf{H}}    
\newcommand{\bK}{\mathbf{K}}    
\newcommand{\bS}{\mathbf{S}}    
\newcommand{\bT}{\mathbf{T}}    
\newcommand{\bJ}{\mathbf{J}}    
\newcommand{\bF}{\mathbf{F}}    
\newcommand{\bG}{\mathbf{G}}    
\newcommand{\beps}{\boldsymbol{\varepsilon}}
\newcommand{\trans}[3]{\lbrace #1,#2\rbrace_{#3}}
\DeclareMathOperator{\tr}{tr}
\DeclareMathOperator{\Ad}{Ad}
\DeclareMathOperator{\1dot}{\cdot}
\DeclareMathOperator{\2dots}{:}
\DeclareMathOperator{\3dots}{\raisebox{-0.25ex}{\vdots}}
\newcommand{\rdots}[1]{\overset{(#1)}{\cdot}}
\newcommand{\symrdots}[1]{\overset{(#1)}{\underset{s}{\cdot}}}
\newcommand{\norm}[1]{\lVert#1\rVert}    
\newcommand{\set}[1]{\left\{#1\right\}}  
\begin{document}

\title[Generic separating sets]{Generic separating sets for 3D elasticity tensors}

\author{R. Desmorat}
\address[Rodrigue Desmorat]{LMT  (ENS Cachan, CNRS, Universit\'{e} Paris Saclay), F-94235 Cachan Cedex, France}
\email{desmorat@lmt.ens-cachan.fr}

\author{N. Auffray}
\address[Nicolas Auffray]{MSME, Universit\'{e} Paris-Est, Laboratoire Mod\'{e}lisation et Simulation Multi Echelle, MSME UMR 8208 CNRS, 5 bd Descartes, 77454 Marne-la-Vall\'{e}e, France}
\email{Nicolas.auffray@univ-mlv.fr}

\author{B. Desmorat}
\address[Boris Desmorat]{Sorbonne Universit\'{e}, UMPC Univ Paris 06, CNRS, UMR 7190, Institut d'Alembert, F-75252 Paris Cedex 05, France \& Univ Paris Sud 11, F-91405 Orsay, France}
\email{boris.desmorat@upmc.fr}

\author{B. Kolev}
\address[Boris Kolev]{LMT  (ENS Cachan, CNRS, Universit\'{e} Paris Saclay), F-94235 Cachan Cedex, France}
\email{boris.kolev@math.cnrs.fr}

\author{M. Olive}
\address[Marc Olive]{LMT  (ENS Cachan, CNRS, Universit\'{e} Paris Saclay), F-94235 Cachan Cedex, France}
\email{marc.olive@math.cnrs.fr}

\subjclass[2010]{74E10 (15A72 74B05)}%
\keywords{Anisotropy; Polynomial invariants; Rational invariants, Separating sets}%

\date{\today}


\begin{abstract}
  We define what is a \emph{generic separating set} of invariant functions (a.k.a. a \emph{weak functional basis}) for tensors. We produce then two generic separating sets of polynomial invariants for 3D elasticity tensors, one made of 19 polynomials and one made of 21 polynomials (but easier to compute) and a generic separating set of 18 rational invariants. As a byproduct, a new
  integrity basis for the fourth-order harmonic tensor is provided.
\end{abstract}

\maketitle

\section{Introduction}
\label{sec:intro}

Assuming that one could measure the elasticity tensors of two materials, it is a natural question to ask, \emph{if one can decide by finitely many calculations, whether the two materials have identical elastic properties} (are identical as elastic materials), in other words if the two elasticity tensors are related by a rotation. More precisely, two elasticity tensors $\bE_{1}$ and $\bE_{2}$ belonging to the vector space $\Ela$, of fourth order tensors having major and left/right minor indicial symmetries
\begin{equation*}
  E_{ijkl} = E_{ijlk} = E_{klij},
\end{equation*}
define the \emph{same elastic material}, if and only if, there exists a rotation $g\in\SO(3)$ such that
\begin{equation*}
  (\bE_{2})_{ijkl}= g_{ip}g_{jq}g_{kr} g_{ls} (\bE_{1})_{pqrs} \,,
\end{equation*}
a relation that we shall denote by
\begin{equation*}
  \bE_{2} = g \star \bE_{1},
\end{equation*}
and we say then that the two tensors \emph{are in the same orbit}. When such a rotation does not exist, the two tensors describe \emph{different} elastic materials.

Based on the fact that the algebra of invariant polynomials of a linear representation of the rotation group is \emph{finitely generated}~\cite{Hil1993,Stu2008} and \emph{separates the orbits}~\cite[Appendix C]{AS1983}, abstract invariant theory gives an affirmative answer to this question. Nevertheless, calculating explicitly a generating set for this invariant algebra can be an extremely difficult task.

The determination of such a set for the elasticity tensor has a long history, which can be traced back to the work of Betten~\cite{Bet1982,Bet1987}, who obtained some partial results. The question was formulated in rigourous mathematical terms by Boehler et al. in~\cite{BKO1994}, where the link with \emph{invariants of binary forms} was established for the first time. However, the authors did not provide a final answer to the problem. A minimal set of 297 generators for the invariant algebra of the 3D elasticity tensor was finally obtained in 2017, by some of the present authors, in~\cite{OKA2017}, which definitively solved this old problem (see also~\cite{OKDD2018a} for a tensorial expression of these generators, who were first expressed in~\cite{OKA2017} using \emph{transvectants} of binary forms).

Whether this minimal integrity basis can be reduced to obtain a separating set (a.k.a., a functional basis in the mechanical community \cite{WP1964}) of lower cardinality is nevertheless still an open problem. The difficulty is that there is no known general procedure to produce explicit general separating sets whereas there are constructive algorithms to obtain integrity bases~\cite{DK2015,Oli2017}.

There is a huge literature on integrity and functional basis for an $n$-uplet of second-order symmetric tensors or more generally for a family of second-order symmetric tensors and vectors (including, thus, skew-symmetric second-order tensors)~\cite{You1898/99,SR1958/1959,Smi1965,Smi1971,Wan1970a,Wan1970b}. Usually, these functional bases are polynomial~\cite{Wan1969a,Wan1969b,Smi1971}. For higher-order tensors, results are usually sparse or incomplete~\cite{BKO1994,SB1997,OA2014}. Up to the authors best knowledge, nothing is known concerning the elasticity tensor but the 297 invariants of a minimal integrity basis~\cite{OKA2017,OKDD2018}.

Since most materials have no symmetry in practice (they are \emph{triclinic}), their membership to higher-symmetry classes is just a convenient approximation of the reality. Therefore, the notion of separating set/functional basis can be weakened again, in order to reduce its cardinal. To be more specific, the notion of \emph{weak separating set} --- also known as a \emph{weak functional basis} --- has been formulated in~\cite{BKO1994}, in the sense that they separate only \emph{generic} tensors (defined rigorously in Section~\ref{sec:separating-sets}, using Zariski topology). In~\cite{BKO1994}, Boehler, Kirillov and Onat produced a weak separating set of 39 polynomial invariants for $\bE\in \Ela$.

In the present paper, by formulating slightly different genericity conditions, we produce a weak separating set of \emph{21 polynomial invariants} for the elasticity tensor. This result, formulated as Theorem~\ref{thm:main}, is our main theorem. Moreover, translating results on rational invariants of the binary form of degree 8 by Maeda in~\cite{Mae1990}, we can shorten this number to 19 (Corollary \ref{cor:main-19}), but the corresponding polynomial invariants are more complicated. We can also deduce a set of 18 \emph{rational invariants} which separate generic elasticity tensors (Corollary \ref{cor:main-18}).

The paper is organized as follows. In section~\ref{sec:integrity-basis}, we recall basic notions on \emph{integrity basis} and produce a new minimal integrity basis for $\HH^{4}(\RR^3)$, the space of fourth-order harmonic tensors. In section~\ref{sec:separating-sets}, we introduce various definitions of \emph{separating sets} and formulate rigorously the concept of \emph{genericity}. Formulations of the main result, some corollaries and their proofs are provided in Section~\ref{sec:seps-elasticity}. The mathematical material needed to understand the link between \emph{invariant theory of binary forms} and \emph{invariant theory of harmonic tensors} is recalled in~Appendix~\ref{sec:rational-invariants}. A set of 18 \emph{rational invariants} which separate generic fourth-order harmonic tensors is then provided in~Appendix~\ref{sec:Maeda-invariants} by translating \emph{Maeda invariants}~\cite{Mae1990} into invariants of the fourth-order harmonic tensor.

\subsection*{Notations}
\label{subsec:notations}

We denote by $\TT^{n}(\RR^3)$, the space of $n$th-order tensors on $\RR^{3}$ and by $\Sym^{n}(\RR^3)$, the subspace of totally symmetric tensors of order $n$. A traceless tensor $\bH \in \Sym^{n}(\RR^3)$ is called an \emph{harmonic tensor} and the space of $n$th-order harmonic tensors is noted $\HH^{n}(\RR^{3})$.

The total symmetrisation of a tensor $\bT\in \TT^{n}(\RR^3)$ is the tensor $\bT^{s}\in \Sym^{n}(\RR^3)$, defined by
\begin{equation*}
  (T^{s})_{i_{1}\dotsc i_{n}} =  \frac{1}{n!}\sum_{\sigma \in \mathfrak{S}_{n}} T_{i_{\sigma(1)} \dotsc i_{\sigma(n)}},
\end{equation*}
where $\mathfrak{S}_{n}$ is the permutation group over $n$ elements.

The \emph{symmetric tensor product} between two totally symmetric tensors $\bS^{k}\in \Sym^{n_{k}}(\RR^3)$ is defined as
\begin{equation}\label{eq:symmetric-tensor-product}
  \bS^{1} \odot \bS^{2} := (\bS^{1} \otimes \bS^{2})^{s} \in \Sym^{n_{1}+n_{2}}(\RR^3).
\end{equation}

The \emph{$r$-contraction} between two tensors $\bT^{k}\in \TT^{n_{k}}(\RR^3)$ is defined (in an orthonormal basis) as
\begin{equation*}
  (\bT^{1} \overset{(r)}{\cdot} \bT^{2})_{i_{1} \dotsc i_{n_{1}-r}j_{r+1} \dotsc j_{n_{2}}} := T^{1}_{i_{1}\dotsc i_{n_{1}-r}k_{1}\dotsc k_{r}} \, T^{2}_{k_{1}\dotsc k_{r}j_{r+1} \dotsc j_{n_{2}}}.
\end{equation*}
In particular, we get
\begin{equation*}
  \begin{array} {ll}
    (\ba\1dot\bb)_{ij} = a_{ik}b_{kj},        & \ba\2dots\bb=a_{ij}b_{ij},                 \\
    (\bH\2dots\ba)_{ij} = H_{ijkl}a_{kl},     & (\bH \2dots \bK)_{ijkl} = H_{ijpq}K_{pqkl} \\
    (\bH \3dots \bK)_{ij} = H_{ipqr}K_{pqrj}. &
  \end{array}
\end{equation*}
where $\ba, \bb$ are two second-order tensors and $\bH, \bK$, two fourth-order tensors. The usual abbreviations
$\ba^2 = \ba\1dot\ba$, $\ba\bb = \ba\1dot\bb$ and $\bH^{2}=\bH\2dots\bH$ shall also be used.

The \emph{symmetric $r$-contraction} between two totally symmetric tensors $\bS^{k}\in \Sym^{n_{k}}(\RR^3)$ is defined as
\begin{equation}\label{eq:symmetric-r-contraction}
  \bS^{1} \symrdots{r} \bS^{2} := (\bS^{1} \overset{(r)}{\cdot} \bS^{2})^{s} \in \Sym^{n_{1}+n_{2}-2r}(\RR^3),
\end{equation}

The \emph{generalized cross product} between two totally symmetric tensors $\bS^{k}\in \Sym^{n_{k}}(\RR^3)$, which has been introduced in~\cite{OKDD2018a}, is defined as
\begin{equation}\label{eq:cross-product}
  \bS^{1} \times \bS^{2} := (\bS^{2} \cdot \beps \cdot \bS^{1})^{s}\in \Sym^{n_{1}+n_{2}-1}(\RR^3),
\end{equation}
where $\beps$ is the third-order Levi-Civita tensor.

The \emph{leading harmonic part} $\bS^{\prime} \in \HH^{n}(\RR^{3})$ of a totally symmetric tensor $\bS \in \Sym^{n}(\RR^{3})$ means the harmonic part of highest order of $\bS$ in its harmonic decomposition (see~\cite[Proposition 2.8]{OKDD2018}, where it was noted $\bS_{0}$ rather than $\bS^{\prime}$).

The \emph{harmonic product} between two harmonic tensors $\bH^{k}\in \HH^{n_{k}}(\RR^3)$, which has been introduced in~\cite{OKDD2018}, is defined as
\begin{equation}\label{eq:harmonic-product}
  \bH^{1} \ast \bH^{2} := (\bH^{1} \odot \bH^{2})^{\prime} \in \HH^{n_{1}+n_{2}}(\RR^{3}).
\end{equation}

\section{Integrity basis}
\label{sec:integrity-basis}

In this paper, we consider a \emph{linear representation} $\VV$ of the 3-dimensional orthogonal group $\OO(3)$. This means that we have a mapping
\begin{equation*}
  (g,\vv) \mapsto g \star \vv, \qquad \OO(3) \times \VV \to \VV,
\end{equation*}
which is linear in $\vv$ and such that
\begin{equation*}
  (g_{1}g_{2}) \star \vv = g_{1} \star (g_{2} \star \vv).
\end{equation*}
Often, $\VV$ is a subspace of $\TT^{n}(\RR^{3})$ and,
\begin{equation*}
  (g \star \bT)(\xx_{1}, \dotsc ,\xx_{n}) := \bT(g^{-1}\xx_{1}, \dotsc ,g^{-1}\xx_{n}), \qquad g \in \OO(3), \, \bT \in \VV,
\end{equation*}
where
\begin{equation*}
  \bT(\xx_{1}, \dotsc ,\xx_{n}) = \sum_{i_{1}, \dotsc , i_{n}} T_{i_{1} \dotsc i_{n}} \, x_{1}^{i_{1}} \dotsm x_{n}^{i_{n}}.
\end{equation*}
Such a representation is then called a \emph{tensorial representation}.

\begin{rem}
  Note that the representations of $\OO(3)$ and $\SO(3)$ on \emph{even-order tensors} are the same because, then,
  \begin{equation*}
    (-\id) \star \bT = \bT, \qquad \forall \, \bT \in \TT^{2n}(\RR^{3}),
  \end{equation*}
  where $\id$ is identity in $\OO(3)$.
\end{rem}

A polynomial function $p$ defined on $\VV$ (\textit{i.e} which can be written as a polynomial in components of $\vv \in \VV$ in any basis) is \emph{invariant} if
\begin{equation*}
  p(g\star \vv) = p(\vv), \qquad \forall g \in \OO(3),\, \forall \vv  \in \VV.
\end{equation*}
The set of $\OO(3)$-invariant polynomial functions is a subalgebra of the polynomial algebra $\RR[\VV]$ of real polynomial functions on $\VV$, which will be denoted by $\RR[\VV]^{\OO(3)}$.

\begin{defn}[Integrity basis]
  A finite set of $\OO(3)$-invariant polynomials $\set{P_{1}, \dotsc , P_{k}}$ over $\VV$ is a \emph{generating set} (also called an \emph{integrity basis}) of the invariant algebra $\RR[\VV]^{\OO(3)}$ if any $\OO(3)$-invariant polynomial $J$ over $\mathbb{V}$ is a polynomial function in $P_{1},\dotsc,P_{k}$, \textit{i.e} if $J$ can be written as
  \begin{equation*}
    J(\vv) = p(P_{1}(\vv), \dotsc ,P_{k}(\vv)), \qquad \vv \in \VV,
  \end{equation*}
  where $p$ is a polynomial function in $k$ variables. An integrity basis is \emph{\emph{minimal}} if no proper subset of it is an integrity basis.
\end{defn}

\begin{ex}[$\VV=\RR^{3}\oplus\dotsb\oplus\RR^{3}$]
  For an $n$-uplet of vectors $(\vv_{1}, \dotsc ,\vv_{n})$, Weyl~\cite{Wey1939} proved that a minimal integrity basis of the diagonal representation of $\OO(3)$ is given by the family
  \begin{equation*}
    \vv_{i} \cdot \vv_{j}, \qquad  i,j = 1, \dotsc , n.
  \end{equation*}
\end{ex}

\begin{ex}[$\VV=\Sym^{2}(\RR^{3})$]
  Another classical example is given by the standard $\OO(3)$-representation on $\Sym^{2}(\RR^{3})$, the space of symmetric second-order tensors on $\RR^{3}$. A minimal integrity basis is given by
  \begin{equation*}
    \tr \ba, \qquad \tr \ba^{2}, \qquad \tr \ba^{3},
  \end{equation*}
  where $\ba\in \Sym^{2}(\RR^{3})$.
\end{ex}

A minimal integrity basis is not unique but its cardinality as well as the degrees of its members are independent of the basis~\cite{Dix1990}. For instance, an alternative minimal integrity basis of $\RR[\Sym^{2}(\RR^{3})]^{\OO(3)}$ is given by the three elementary functions
\begin{equation*}
  \sigma_{1} : = \lambda_{1} + \lambda_{2} + \lambda_{3}, \qquad
  \sigma_{2} : = \lambda_{1}\lambda_{2} + \lambda_{1}\lambda_{3} + \lambda_{2}\lambda_{3}, \qquad
  \sigma_{3} : = \lambda_{1}\lambda_{2}\lambda_{3},
\end{equation*}
where $\lambda_{k}$ are the eigenvalues of the second-order symmetric tensor $\ba$. These two minimal integrity bases are related by invertible polynomial relations, more precisely
\begin{gather*}
  \sigma_{1} = \tr \ba, \qquad \sigma_{2} = \frac{1}{2}\left((\tr \ba)^{2} - \tr \ba^{2}\right),
  \\
  \sigma_{3} = \frac{1}{6}\left((\tr \ba)^{3}-3 \tr \ba \, \tr \ba^{2} + 2\tr \ba^{3}\right),
\end{gather*}
and conversely
\begin{equation*}
  \tr \ba= \sigma_{1},
  \qquad
  \tr \ba^{2}= \sigma_{1}^{2}-2 \sigma_{2},
  \qquad
  \tr \ba^{3} = \sigma_{1}^{3}-3 \sigma_{1} \sigma_{2} +3 \sigma_{3}.
\end{equation*}

For a couple $(\ba, \bb)$ of second-order symmetric tensors, that is for
\begin{equation*}
  \VV=\Sym^{2}(\RR^{3}) \oplus \Sym^{2}(\RR^{3}),
\end{equation*}
a minimal integrity basis for the diagonal $\OO(3)$-representation is known since at least 1958~\cite{SR1958/1959}, and can be found in many references, for instance~\cite{Smi1965,Boe1978,Boe1987,Zhe1994}. More precisely, the following result holds.

\begin{prop}\label{prop:IB-S2+S2}
  The following collection of ten polynomial invariants
  \begin{gather*}
    \tr \ba, \qquad \tr \ba^{2}, \qquad \tr \ba^{3}, \qquad \tr \bb, \qquad \tr \bb^{2}, \qquad \tr \bb^{3},
    \\
    \tr \ba\bb, \qquad \tr \ba^{2}\bb, \qquad \tr \ba \bb^{2}, \qquad \tr \ba^{2}\bb^{2}.
  \end{gather*}
  is a minimal integrity basis for $\RR[\Sym^{2}(\RR^{3}) \oplus \Sym^{2}(\RR^{3})]^{\OO(3)}$.
\end{prop}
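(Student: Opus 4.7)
The plan is to invoke Weyl's first fundamental theorem for the orthogonal group, which asserts that every polynomial invariant of the diagonal $\OO(3)$-action on $\Sym^{2}(\RR^{3}) \oplus \Sym^{2}(\RR^{3})$ is a polynomial combination of the complete index contractions of tensor products of copies of $\ba$ and $\bb$. Since both tensors are symmetric, such contractions rearrange into traces of ordinary matrix words, so $\RR[\Sym^{2}(\RR^{3}) \oplus \Sym^{2}(\RR^{3})]^{\OO(3)}$ is generated, as an algebra, by the countable family of trace monomials
\begin{equation*}
  \tr(M_{1} M_{2} \cdots M_{k}), \qquad M_{j} \in \set{\ba, \bb}, \quad k \geq 1.
\end{equation*}

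The second step is to cut this family down to the ten listed invariants. The main tool is the Cayley--Hamilton theorem in dimension three applied to an arbitrary linear combination $\lambda \ba + \mu \bb$, whose polarization with respect to $(\lambda, \mu)$ produces the Rivlin--Sibirskii trace identities. These identities rewrite, under the trace, every word of length at least $4$ in $\ba$ and $\bb$ as a polynomial in traces of strictly shorter words; in particular they express $\tr(\ba \bb \ba \bb)$ in terms of invariants of bi-degree at most $(2,2)$. Combined with cyclic invariance of the trace, and with Cayley--Hamilton applied to $\ba$ and $\bb$ separately (which reduces $\tr \ba^{k}$ and $\tr \bb^{k}$ for $k \geq 4$), the iteration terminates after finitely many steps and leaves exactly the ten claimed generators.

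Minimality is then obtained by a Molien-series argument. One computes the bi-graded Hilbert series of $\RR[\Sym^{2}(\RR^{3}) \oplus \Sym^{2}(\RR^{3})]^{\OO(3)}$ with respect to the natural $\ZZ^{2}$-grading induced by the total degrees in $\ba$ and $\bb$. For each of the ten candidates $P_{i}$, a direct dimension count in the bi-degree of $P_{i}$ shows that monomials in the remaining nine do not fill the corresponding invariant subspace; equivalently, one exhibits an explicit pair of tensor couples on which the nine other invariants agree but $P_{i}$ does not. Consequently no $P_{i}$ can be discarded.

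The main obstacle is the combinatorial reduction of the second step. Since trace words of unbounded length must be handled, one needs termination of the reduction scheme together with a careful treatment of the bi-degree $(2,2)$ piece, where the candidate $\tr(\ba \bb \ba \bb)$, alongside the listed $\tr \ba^{2} \bb^{2}$, would otherwise threaten generation; the key non-trivial fact is precisely that $\tr(\ba \bb \ba \bb)$ is polynomial in the ten listed invariants, a classical identity going back to Rivlin, Spencer, Smith and Sibirskii in the late 1950s.
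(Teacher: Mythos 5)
The paper offers no proof of this proposition at all: it is quoted as a classical result, with references to Spencer--Rivlin (1958), Smith, Boehler and Zheng, so there is no in-paper argument to compare yours against step by step. Your outline reconstructs the standard proof underlying those references --- Weyl's first fundamental theorem for $\OO(3)$ to reduce everything to trace monomials in $\ba$ and $\bb$, the polarized Cayley--Hamilton (Rivlin--Spencer) identities to bound the word length, and a bidegree-by-bidegree count (or explicit separating examples) for minimality --- so the route is the expected one and is consistent with what the paper invokes. One sentence in your second step is internally inconsistent, though, and should be repaired: you assert that the trace identities rewrite \emph{every} word of length at least $4$ as a polynomial in traces of \emph{strictly shorter} words; if that were literally true, $\tr \ba^{2}\bb^{2}$ would itself be redundant and only nine generators would remain. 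What the polarization of the Cayley--Hamilton identity in three matrix slots actually yields, after multiplying by a fourth matrix and tracing with $A_{1}=A_{2}=\ba$, $A_{3}=\bb$, $D=\bb$, is a relation of the form $2\tr(\ba^{2}\bb^{2})+\tr(\ba\bb\ba\bb)=(\text{polynomial in traces of words of length}\le 3)$, so exactly one bidegree-$(2,2)$ generator must be retained, and it is $\tr(\ba\bb\ba\bb)$, not $\tr(\ba^{2}\bb^{2})$, that gets eliminated; the genuine reduction statement concerns words of length at least $5$. With that correction, and granting the classical Spencer--Rivlin computation you defer to for the termination of the rewriting and for minimality, the argument is sound; it simply supplies the proof that the paper chose to leave to the literature.
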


For higher order tensors, the determination of an integrity basis is much more complicated and one way to compute such a basis requires first to decompose the tensor space $\VV$ into \emph{irreducible representations} called also an \emph{harmonic decomposition} of $\VV$ (see~\cite{Bac1970,Cow1989,Bae1993,ABR2001,AH2012,OKA2017} for more details). In this decomposition, the irreducible factors are isomorphic to the spaces $\HH^{n}(\RR^{3})$, of $n$th-order \emph{harmonic tensors}. Such a decomposition is, in general, not unique. For the elasticity tensor, for which $E_{ijkl}=E_{ijlk}=E_{klij}$, we can use, for instance, the following explicit decomposition:
\begin{equation}\label{eq:dec-harm-E}
  \bE = (\lambda, \mu, \bd^{\prime}, \bv^{\prime}, \bH),
\end{equation}
with
\begin{equation*}
  \lambda := \tr \bd,  \qquad \mu := \tr \bv, \qquad \bd' := \bd - (\lambda/3) \bq, \qquad \bv' := \bv - (\mu/3) \bq,
\end{equation*}
where $\bd:=  \tr_{12}\bE$ (\textit{i.e.} $d_{ij}=E_{kkij}$) is the \emph{dilatation tensor}, $ \bv:=\tr_{13}\bE$ (\textit{i.e.} $v_{ij}=E_{kikj}$) is the \emph{Voigt tensor} and
\begin{equation*}
  \bH := (\bE^{s})^{\prime} = \bE^{s} - \bq \odot \ba^{\prime} - \frac{7}{30} (\tr \ba)\, \bq \odot \bq, \quad \text{where} \quad \ba := \frac{2}{7}(\bd+2\bv),
\end{equation*}
where $\bq$ is the Euclidean tensor (the scalar product). Note that in any decomposition of the elasticity tensor, the fourth-order component $\bH$ is uniquely defined, which is not the case of the other components.

A minimal integrity basis of $\RR[\HH^{4}]^{\OO(3)}$ was exhibited for the first time by Boehler, Onat and Kirillov~\cite{BKO1994} and republished later by Smith and Bao~\cite{SB1997}. In both cases, the derivation is based on original mathematical results obtained earlier by Shioda~\cite{Shi1967} and von Gall~\cite{vGal1880} on binary forms (see Appendix~\ref{sec:rational-invariants}). The corresponding minimal integrity basis, provided in~\cite{BKO1994}, uses the following second-order covariants, \emph{i.e.} second-order tensor valued functions $\bd(\bH)$, depending of $\bH$ in such a way that
\begin{equation*}\label{eq:cov2}
  g \star \bd(\bH) = \bd(g \star\bH),
\end{equation*}
for all $\bH\in \HH^{4}(\RR^3)$ and $g \in \OO(3)$ (see~\cite{OKDD2018a} for more details).

\begin{thm}[Boehler--Kirillov--Onat]
  \label{prop:IB1-H4}
  Let $\bH \in \HH^{4}(\RR^3)$ and set:
  \begin{equation}\label{eq:boehler-invariants}
    \begin{array} {lll}
      \bd_{2}= \tr_{13} \bH^{2},                 & \bd_{3} = \tr_{13} \bH^{3},              & \bd_{4} = \bd_{2}^{2},                         \\
      \bd_{5} = \bd_{2}(\bH: \bd_{2}),           & \bd_{6} = \bd_{2}^{3},                   & \bd_{7} = \bd_{2}^{2} (\bH: \bd_{2})           \\
      \bd_{8} = \bd_{2}^{2} (\bH^{2} : \bd_{2}), & \bd_{9} = \bd_{2}^{2}(\bH :\bd_{2}^{2}), & \bd_{10} = \bd_{2}^{2} (\bH^{2}: \bd_{2}^{2}).
    \end{array}
  \end{equation}
  A minimal integrity basis for $\bH$ is given by the nine following invariants:
  \begin{equation}\label{eq:IB1-H4}
    J_{k} = \tr \bd_{k} , \qquad k = 2, \dotsc ,10.
  \end{equation}
\end{thm}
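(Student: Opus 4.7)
The plan is to reduce the problem to the classical invariant theory of binary forms. Via the standard $\SO(3)$-equivariant isomorphism between $\HH^4(\RR^3)$ and the (realified) space of binary forms of degree $8$, recalled in Appendix~\ref{sec:rational-invariants}, the algebra $\RR[\HH^4]^{\OO(3)}$ is identified with the corresponding real invariant algebra. Shioda~\cite{Shi1967}, completing earlier work of von Gall~\cite{vGal1880}, proved that a minimal integrity basis of this latter algebra consists of exactly nine homogeneous generators, of respective degrees $2, 3, 4, 5, 6, 7, 8, 9, 10$. The task therefore reduces to exhibiting nine tensorial $\OO(3)$-invariants on $\HH^4(\RR^3)$ with these degrees that correspond, under this isomorphism, to Shioda's generators.

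First, I would verify that each $J_k$ in \eqref{eq:IB1-H4} is indeed an $\OO(3)$-invariant polynomial: the tensors $\bd_k$ in \eqref{eq:boehler-invariants} are second-order covariants of $\bH$ (built by equivariant contractions), so their traces are automatically invariant. A direct count of the occurrences of $\bH$ in \eqref{eq:boehler-invariants} gives $\deg J_k = k$ for $k = 2, \dotsc, 10$. Second, I would use the Poincaré/Molien series of $\RR[\HH^4]^{\OO(3)}$, computable from the $\SO(3)$-character of $\HH^4$ and coinciding with Sylvester's classical series for binary octavics, and compare it with the Hilbert series of the subalgebra generated by $J_2,\dotsc,J_{10}$; an equality of series proves generation. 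Minimality then follows immediately, since the nine degrees are pairwise distinct: removing any $J_k$ would leave the subalgebra with a Poincaré series strictly smaller than Shioda's at degree $k$, a contradiction.

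The main obstacle is the generation step, i.e.\ proving that the subalgebra generated by $\{J_2, \dotsc, J_{10}\}$ exhausts the whole invariant algebra. This amounts to identifying the $J_k$ with Shioda's classical generators up to a triangular polynomial substitution whose leading coefficient is nonzero at each degree. Concretely, one must translate each classical transvectant invariant of the octavic into a tensorial contraction of the form \eqref{eq:boehler-invariants} and verify the inverse expressibility. This dictionary between the symbolic language of binary covariants and the tensorial language of $\bH$ is finite but combinatorially intricate; it constitutes the real substance of the proof and was carried out (independently) in~\cite{BKO1994} and~\cite{SB1997}. An alternative but more laborious route avoids the binary-form translation and proceeds by a direct reduction of tensorial monomials in $\bH$ using \emph{Cayley--Hamilton} applied to the second-order covariant $\bd_2$ together with the trace identities relating $\tr \bd_2^p \, (\bH \overset{(r)}{\cdot} \bd_2^q)$ for various $p,q,r$.
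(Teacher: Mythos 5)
The paper does not actually prove this theorem: it quotes it from Boehler--Kirillov--Onat~\cite{BKO1994} (republished by Smith--Bao~\cite{SB1997}), noting only that the derivation rests on Shioda's and von Gall's determination of the nine generators, of degrees $2,\dotsc,10$, of the invariant algebra of the binary octavic. Your outline reproduces exactly that route --- equivariant reduction to $\Sn{8}$ via the Cartan map, the degree count $\deg J_k = k$, and the identification of the $J_k$ with Shioda's generators modulo decomposable invariants --- so it takes essentially the same approach, with the one substantive verification you defer (the transvectant-to-tensor dictionary) being precisely the content of the cited references.
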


\begin{rem}
  The invariant algebra $\RR[\HH^{4}]^{\OO(3)}$ is not free: the polynomials $J_{2}$, \ldots , $J_{7}$ are algebraically independent (\textit{i.e.}, the only polynomial $Q$ such that $Q(J_{2}, \dotsc , J_{7}) = 0$ is the zero polynomial), but $J_{8}$, $J_{9}$, $J_{10}$ are subject to algebraic relations involving the first six invariants and known as \emph{syzygies}, see~\cite{Shi1967,LR2012}.
\end{rem}

Recall that, even if a minimal integrity basis is not unique, its cardinality and the degree of its elements are the same for all bases \cite{Dix1990}. A remarkable observation is that there exists a minimal integrity basis of $\HH^{4}(\RR^3)$, involving only the two second-order covariants $\bd_{2}$, $\bd_{3}$ introduced in~\eqref{eq:boehler-invariants}.

\begin{thm}\label{thm:IB2-H4}
  The following nine polynomial invariants
  \begin{equation} \label{eq:Ikinvariants}
    \begin{array} {lll}
      I_{2}=\tr \bd_{2},                & I_{3}=\tr \bd_{3},      & I_{4}= \tr \bd_{2}^{2},
      \\
      I_{5}=\tr (\bd_{2} \bd_{3}),      & I_{6}= \tr \bd_{2}^{3}, & I_{7}=\tr (\bd_{2}^{2}\bd_{3}),
      \\
      I_{8}=\tr (\bd_{2} \bd_{3}^{2}) , & I_{9}= \tr \bd_{3}^{3}, & I_{10}=\tr (\bd_{2}^{2} \bd_{3}^{2}).
    \end{array}
  \end{equation}
  form a minimal integrity basis of $\RR[\HH^{4}]^{\OO(3)}$.
\end{thm}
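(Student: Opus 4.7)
I would deduce Theorem~\ref{thm:IB2-H4} from the BKO integrity basis of Theorem~\ref{prop:IB1-H4}. Both sets contain exactly nine polynomial invariants and share the same multiset of degrees $\{2,3,4,5,6,7,8,9,10\}$, so by~\cite{Dix1990} the minimality of $\{I_2,\dots,I_{10}\}$ will follow automatically as soon as one proves that it is a \emph{generating} set of $\RR[\HH^{4}]^{\OO(3)}$. The whole problem therefore reduces to showing that every BKO generator $J_k=\tr\bd_k$ lies in the subalgebra $\RR[I_2,\dots,I_{10}]$.

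The equalities $J_2=I_2$, $J_3=I_3$, $J_4=I_4$ and $J_6=I_6$ are immediate from the definitions in~\eqref{eq:boehler-invariants}. What remains is to rewrite $J_5$, $J_7$, $J_8$, $J_9$, $J_{10}$ polynomially in the $I_k$. Each of these five BKO invariants has the shape $\tr(\bd_2^p\,\bK)$, where $\bK$ is one of the auxiliary second-order covariants $\bH:\bd_2$, $\bH:\bd_2^2$, $\bH^2:\bd_2$ or $\bH^2:\bd_2^2$. My plan is to establish a short list of reduction identities expressing each such $\bK$ as a polynomial in $\bd_2$, $\bd_3$ and $\bq$ with scalar coefficients in $\RR[I_2,\dots,I_{10}]$. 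Cayley--Hamilton in $\Sym^2(\RR^3)$ then collapses the remaining products, and by Proposition~\ref{prop:IB-S2+S2} the traces that appear are generated by the ten mixed trace invariants of the pair $(\bd_2,\bd_3)$; nine of these are our $I_k$, while the tenth, $\tr\bd_3^2$, will emerge as a polynomial combination of lower-degree $I_k$ forced by the harmonic structure of $\bH$.

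The central case is $\bK=\bH:\bd_2$. Because $\bH$ is traceless, $\bH:\bd_2$ is automatically deviatoric; an $\OO(3)$-isotypic count (equivalently, a Cayley--Sylvester count of order-$4$ degree-$3$ covariants of the binary octavic via the dictionary of Appendix~\ref{sec:rational-invariants}, which equals one) shows that the space of deviatoric second-order covariants of $\bH$ of polynomial degree $3$ is one-dimensional, spanned by the deviatoric part of $\bd_3$. This forces a universal identity $\bH:\bd_2 = \alpha\bigl(\bd_3 - \tfrac{I_3}{3}\,\bq\bigr)$, with a numerical constant $\alpha$ fixed by evaluation on a single explicit harmonic tensor. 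Entirely analogous dimension counts and identities (in higher degrees) handle $\bH^2:\bd_2$, $\bH:\bd_2^2$ and $\bH^2:\bd_2^2$. Substituting into the BKO definitions of $J_5,J_7,J_8,J_9,J_{10}$ and contracting then expresses each such $J_k$ as an isobaric polynomial in $I_2,\dots,I_{10}$, which concludes the generation and, jointly with~\cite{Dix1990}, the minimality.

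The main obstacle, in my view, is the higher-degree case $\bH^2:\bd_2^2$: the corresponding isotypic component of deviatoric second-order covariants is no longer one-dimensional, so the reduction identity involves several unknown coefficients that must be pinned down simultaneously and consistently with the lower-degree ones. The cleanest route is to transport the whole problem to the binary octavic via Appendix~\ref{sec:rational-invariants}: each $\bd_k$ and each $\bH^i:\bd_2^j$ translates into a specific transvectant of the degree-$8$ form $f$ representing $\bH$, and the reduction identities reduce to standard transvectant manipulations whose numerical coefficients follow from a short purely algebraic verification on $f$.
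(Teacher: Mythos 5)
Your proposal is correct and takes essentially the same route as the paper: both arguments reduce the theorem to exhibiting invertible polynomial relations between $\{I_2,\dotsc,I_{10}\}$ and the Boehler--Kirillov--Onat basis $\{J_2,\dotsc,J_{10}\}$ of Theorem~\ref{prop:IB1-H4}, with minimality then coming for free from the invariance of the cardinality and the degrees of a minimal integrity basis~\cite{Dix1990}. The only difference is one of presentation: the paper simply writes down the explicit conversion formulas (e.g.\ $J_5=\tfrac{1}{3}(6I_5-2I_2I_3)$, which is exactly your identity $\bH:\bd_2=2\,(\bd_3-\tfrac{I_3}{3}\bq)$ after contraction with $\bd_2$), whereas you outline how to derive them by covariant dimension counts and transvectant algebra, deferring the degree $8$--$10$ coefficient computations that the paper states explicitly.
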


The proof follows from the fact that there are algebraic relations between the two sets of invariants provided below.
\begin{equation*}
  I_{2} = J_{2}, \qquad I_{3} = J_{3}, \qquad I_{4} = J_{4}, \qquad I_{6} = J_{6}.
\end{equation*}
Then, we have
\begin{align*}
  I_{5}  & = \frac{1}{6}\big(3J_{5} + 2J_{2} J_{3}\big),                                                                                \\
  I_{7}  & = \frac{1}{6}\big(3J_{7} + 2J_{4} J_{3}\big),                                                                                \\
  I_{8}  & = \frac{1}{1620}\big(1080J_{8} - 1230J_{6}J_{2} + 495J_{5}J_{3} - 216J_{4}^{2} + 1197J_{4}J_{2}^{2}
  \\
         & \quad + 140J_{3}^{2}J_{2} - 237J_{2}^{4}\big),
  \\
  I_{9}  & = \frac{1}{19440}\big(5184 J_{9} - 6480 J_{7}J_{2} + 9456 J_{6}J_{3} + 2025{5}J_{2}^{2}
  \\
         & \quad  - 7974 J_{4}J_{3}J_{2} + 2500 J_{3}^{3} + 1596 J_{3}J_{2}^{3} \big),
  \\
  I_{10} & = \frac{1}{1630}\big(1080 J_{10} - 675 J_{8}J_{2} + 495 J_{7}J_{3} + 24 J_{6}J_{4} - 117 J_{6}J_{2}^{2} - 171 J_{4}^{2}J_{2}
  \\
         & \quad + 190 J_{4}J_{3}^{2} + 228 J_{4}J_{2}^{3} - 45 J_{2}^{5}\big),
\end{align*}
and conversely
\begin{align*}
  J_{5}  & = \frac{1}{3} \big( 6I_{5} - 2I_{2}I_{3} \big),                                                                                                                   \\
  J_{7}  & = \frac{1}{3} \big( 6I_{7} - 2I_{4}I_{3} \big),                                                                                                                   \\
  J_{8}  & = \frac{1}{2160} \big( 3240I_{8} - 1980I_{5}I_{3} + 2460I_{6}I_{2} + 380I_{3}^{2} I_{2} + 432I_{4}^{2} - 2394I_{4}I_{2}^{2}
  \\
         & \quad + 474I_{2}^{4} \big),
  \\
  J_{9}  & = \frac{1}{10368} \big( 38880I_{9} + 25920I_{7}I_{2} - 8100I_{5}I_{2}^{2} - 5000I_{3}^{3} - 18912I_{3}I_{6}
  \\
         & \quad + 7308I_{3}I_{4}I_{2} - 492I_{3}I_{2}^{3} \big),
  \\
  J_{10} & = \frac{1}{17280} \big( 25920I_{10} + 16200I_{8}I_{2} - 15840I_{7}I_{3} - 9900 I_{5}I_{3}I_{2} + 2240I_{3}^{2}I_{4}
  \\
         & \quad + 1900 I_{3}^{2}I_{2}^{2} - 384I_{6}I_{4}+14172I_{6}I_{2}^{2} + 4896I_{4}^{2}I_{2} - 15618 I_{4}I_{2}^{3} + 3090 I_{2}^{5} \big).
\end{align*}

\section{Separating sets}
\label{sec:separating-sets}

The weaker concept of \emph{separating set}, often called a \emph{functional basis} in the mechanical community \cite{WP1964,Boe1987a} (see \cite{Duf2008,Kem2009,DK2015} for alternative definitions in the mathematical community), is formulated in invariant theory as follows.

\begin{defn}[Separating set]\label{Def:funcbas}
  A finite set of $\OO(3)$-invariant functions $\set{s_{1}, \dotsc , s_{n}}$ over $\VV$ is a \emph{separating set} if
  \begin{equation*}
    s_{i}(\vv_{1}) = s_{i}(\vv_{2}), \quad i = 1, \dotsc , n  \implies \exists g \in \OO(3),\, \vv_{1} = g \star \vv_{2}.
  \end{equation*}
  for all $\vv_{1}, \vv_{2} \in \VV$. A separating set is minimal if no proper subset of it is a separating set.
\end{defn}

Note that this definition is very general and the functions $s_{1}$, \ldots, $s_{n}$ are not required to be polynomial.

\begin{rem}\label{rem:interity-separting}
  A remarkable fact is that an integrity basis of $\RR[\VV]^{\OO(3)}$, the algebra of real $\OO(3)$-invariant polynomials over $\VV$, is also a \emph{separating set}~\cite[Appendix C]{AS1983}. However, the cardinal of an integrity basis can be very big (for instance, it is of 297 for the 3D elasticity tensor~\cite{OKA2017}). But, even if no general result exists, the cardinal of a polynomial separating set can be smaller than the cardinal of a minimal integrity basis (see for instance~\cite{Zhe1994}).
\end{rem}

An even weaker concept was suggested in~\cite{BKO1994}, but requires first to define what is meant by \emph{generic tensors} (also called \emph{tensors in general position}). This can be done rigorously by introducing the \emph{Zariski topology} on $\VV$, which is defined by specifying its \emph{closed sets} rather than its \emph{open sets} (see~\cite{Har1977} for more details). A closed set in the Zariski topology is defined as
\begin{equation*}
  Z := \set{\vv \in \VV; \; f(\vv) = 0, \, \forall f \in S}
\end{equation*}
where $S$ is any set of polynomials in $\vv$.

\begin{rem}
  A remarkable fact concerning this topology is that a non-empty closed set is either the whole space or has Lebesgue measure zero~\cite{CT2005,PS2016}.
\end{rem}

A Zariski open set is defined as the complementary set $Z^{c}$ of a closed Zariski set. A non-empty Zariski open set is moreover \emph{open and dense} in the usual topology.

\begin{ex}
  On $\VV = \RR^{3}\oplus\RR^{3}\oplus\RR^{3}$, the following set
  \begin{equation*}
    Z := \set{(\vv_{1}, \vv_{2}, \vv_{3}) \in \VV;\; \det(\vv_{1}, \vv_{2}, \vv_{3}) = 0}
  \end{equation*}
  is a Zariski closed set and
  \begin{equation*}
    Z^{c} = \set{(\vv_{1}, \vv_{2}, \vv_{3}) \in \VV;\; \det(\vv_{1}, \vv_{2}, \vv_{3}) \ne 0}
  \end{equation*}
  is a Zariski open set.
\end{ex}

\begin{defn}[Genericity]\label{def:genericity}
  A vector $\vv$ belonging to some (finite dimensional) vector space $\VV$ is called \emph{generic} (or as \emph{in general position} by algebraic geometers) if it belongs to a non-empty Zariski open set of $\VV$.
\end{defn}

Coming back to our definition of generic tensors, this means that informally speaking, the probability of a randomly chosen tensor being generic is 1 and that we omit, in the results, some tensors satisfying certain algebraic relations. Note, however, that this notion of genericity is arbitrary and there is a lot of freedom in the choice of \emph{such a class} of generic tensors.

\begin{defn}[Weak separating set]
  Given some non-empty Zariski open set $Z^{c} \subset \VV$, a finite set of $\OO(3)$-invariant functions $\set{s_{1}, \dotsc , s_{n}}$ over $\VV$ is called a \emph{weak separating set} (or a \emph{weak functional basis}) if
  \begin{equation*}
    s_{i}(\vv_{1}) = s_{i}(\vv_{2}), \quad i = 1, \dotsc , n  \implies \exists g \in \OO(3),\, \vv_{1} = g \star \vv_{2}.
  \end{equation*}
  for all $\vv_{1}, \vv_{2} \in Z^{c}$.
\end{defn}

The notion of \emph{minimal cardinality} for weak functional bases can also be formulated in a \emph{given class of functions}. We shall say that a weak functional basis is minimal if their is no other weak functional basis with smaller cardinal in the same class of functions. If some results exist for the class of polynomial functions in complex algebraic geometry~\cite{Duf2008}, where some bounds on the cardinal of a minimal weak functional basis are provided, it is not totaly clear how they can be directly translated into the realm of real algebraic geometry.

Besides (weak) functional bases of polynomial functions, there are also results on functional bases of rational functions~\cite{HK2007,GHP2018} (which are necessarily \emph{weak} since tensors for which the denominators vanish are forbidden). For instance, Maeda~\cite{Mae1990} provided a separating set of 6 rational invariants for binary octavics (complex polynomials homogeneous of degree 8 in two variables), which are closely related to harmonic tensors of order 4 (see Appendix~\ref{sec:rational-invariants}). Using this result, we provide in Appendix~\ref{sec:Maeda-invariants} a separating set of 6 rational invariants for $\HH^{4}(\RR^3)$.
This set is minimal because one cannot produce a set of separating invariants of cardinality lower than the \emph{transcendence degree}, which is the maximal number of algebraic independent elements in the fractional field of the invariant algebra~\cite[Page 26]{Bri1996}. For the elasticity tensor, this minimal number is
\begin{equation*}
  \dim \Ela - \dim \OO(3) = 18.
\end{equation*}
On this matter, there is a paper by Ostrosablin~\cite{Ost1998} who suggests a system of 18 separating rational invariants, but no rigourous proof of this result seems to be available in the literature.

Finally, there is a third notion of separability which should not be confused with the preceding ones. It concerns \emph{local separability} and can be formulated as follows.

\begin{defn}[Locally separating set]
  A finite set of $\OO(3)$-invariant functions $\set{s_{1}, \dotsc , s_{n}}$ over $\VV$ is \emph{locally separating in the neighbourhood $U \subset \VV$ of $\vv_{0}$} (for the usual topology of $\VV$) if and only if
  \begin{equation*}
    s_{i}(\vv_{1}) = s_{i}(\vv_{2}), \quad i = 1, \dotsc , n  \implies \exists g \in \OO(3),\, \vv_{1} = g \star \vv_{2}.
  \end{equation*}
  for all $\vv_{1}, \vv_{2} \in U$.
\end{defn}

Such a set can be considered as a ``local chart'' (\textit{i.e} local coordinates) around the orbit of $\vv_{0}$ in the orbit space $\VV/\OO(3)$ (which is not a smooth manifold anyway). A locally separating set of 18 invariants (but not polynomial) for elasticity tensors which have 6 distinct Kelvin moduli~\cite{BBS2007} has been produced in~\cite{BBS2008}.

\begin{rem}
  Since an integrity basis $J = (J_{1}, \dotsc , J_{297})$ is known for the elasticity tensor, one can find a locally separating set of 18 invariants (\textit{i.e.} the minimal number) around each tensor $\bE^{0}$ for which the Jacobian matrix
  \begin{equation*}
    dJ = \left(\frac{\partial J_{p}}{\partial E_{ijkl}}\right)
  \end{equation*}
  has maximal rank 18. Indeed, one can extract from $dJ$, a submatrix
  \begin{equation*}
    (dJ_{p_{1}}, \dotsc, dJ_{p_{18}})
  \end{equation*}
  of rank 18, construct a local \emph{cross-section} as in~\cite[Page 161]{Olv1999} and show that $J_{p_{1}}, \dotsc , J_{p_{18}}$ are locally separating around $\bE^{0}$.
\end{rem}

These several notions of separability differ by the size of the subset $U$ of $\VV$, on which the separating property is defined. The strongest one is the first one (separating set) because the separating property is global and defined over the whole vector space $\VV$. In particular, the minimal integrity basis --- of 297 invariants --- for elasticity tensors produced in~\cite{OKA2017} is a global, albeit non minimal, separating set over the full vector space $\Ela$. A Zariski's open sets $Z^{c}$ being very large (open and dense in the usual topology of $\VV$), the second notion (weak separating set) separates most orbits (except a few ones which constitute a set of zero Lebesgue measure over $\VV$). The last one (local separating set) is the weaker, it separates only tensors in a given neighbourhood $U$ of a given point $\vv_{0} \in \VV$.

\section{Weak separating sets for elasticity tensors}
\label{sec:seps-elasticity}

In~\cite{BKO1994}, Boehler, Kirillov and Onat introduced a set of \emph{generic elasticity tensors} and provided, for this set, a weak separating set of 39 polynomial invariants. Their generic tensors are defined as those for which the second-order covariants $\bd_{2}$ and $\bd_{3}$ do not share a common principal axis. This is equivalent to say that the symmetry class of the pair $(\bd_{2},\bd_{3})$ is triclinic (its symmetry group is reduced to the identity). This condition defines a Zariski open set $Z^{c}$. Some polynomial equations defining the complementary set $Z$ were detailed by the authors using tensor's components. An intrinsic and covariant formulation of these conditions can be formulated as follows (see~\cite[Theorem 8.5]{OKDD2018a})
\begin{equation*}
  (\bd_{2}\vv_{5}) \times \vv_{5} \ne 0 \quad \text{or} \quad (\bd_{3}\vv_{5}) \times \vv_{5} \ne 0,
\end{equation*}
where $\vv_{5} := \beps \2dots (\bd_{2}\bd_{3}-\bd_{3}\bd_{2})$ is a first-order covariant of $\bH$. In the present work, we shall consider a smaller Zariski open set by restricting to elasticity tensors for which \emph{$\bd_{2}$ is furthermore orthotropic} (three distinct eigenvalues). This is equivalent to add the polynomial condition ${\bd_{2}}^{2} \times \bd_{2} \ne 0$ (see~\cite[Lemma 8.1]{OKDD2018a}).

\begin{rem}
  Note that, if the pair $(\bd_{2},\bd_{3})$ is triclinic, then $\bH$ (and hence $\bE$) is triclinic, since a tensor cannot be less symmetric than its covariants. However, the converse does not hold: it is not true that for any triclinic elasticity tensor $\bE$, the pair of second-order covariants $(\bd_{2},\bd_{3})$ is triclinic, the later condition is stronger.
\end{rem}

We will now formulate our main theorem, using the following notations: $(\ba \bb)^{s} := \ba \bb+ \bb \ba$ is the symmetrized matrix product, and $[\ba, \bb] := \ba \bb -\bb \ba$ is the commutator of two second-order symmetric tensors $\ba$, $\bb$.

\begin{thm}\label{thm:main}
  Let $\bE = (\lambda, \mu, \bd^{\prime}, \bv^{\prime}, \bH)$ be an elasticity tensor, $\bd_{2}=\tr_{13}\bH^2$ and $\bd_3=\tr_{13}\bH^3$. Then, the following 21 polynomial invariants, $\lambda=\tr \bd$, $\mu=\tr \bv$,
  \begin{gather*}
    I_{2} := \tr \bd_{2}, \quad I_{3} := \tr \bd_{3}, \quad I_{4} := \tr \bd_{2}^{2}, \quad I_{5} := \tr (\bd_{2} \bd_{3}), \quad I_{6} := \tr \bd_{2}^{3},
    \\
    I_{7} := \tr (\bd_{2}^{2}\bd_{3}), \quad I_{8} := \tr (\bd_{2} \bd_{3}^{2}), \quad I_{9} := \tr \bd_{3}^{3}, \quad I_{10} := \tr (\bd_{2}^{2} \bd_{3}^{2}),
    \\
    D_{3} := \bd^{\prime} \2dots \bd_{2}, \quad  D_{4} := \bd^{\prime} \2dots \bd_{3}, \quad D_{5} := \bd^{\prime} \2dots \bd_{2}^{2},
    \\
    D_{6} := \bd^{\prime} \2dots (\bd_{2}\bd_{3})^{s}, \quad D_{11} := \bd^{\prime} \2dots [\bd_{2},\bd_{3}]^{2},
    \\
    V_{3} := \bv^{\prime} \2dots \bd_{2}, \quad V_{4} := \bv^{\prime} \2dots \bd_{3}, \quad V_{5} := \bv^{\prime} \2dots \bd_{2}^{2 },
    \\
    V_{6} := \bv^{\prime} \2dots (\bd_{2}\bd_{3})^{s}, \quad V_{11} := \bv^{\prime} \2dots [\bd_{2}, \bd_{3}]^{2},
  \end{gather*}
  separate generic tensors $\bE$, satisfying the following conditions: (1) the pair $(\bd_{2},\bd_{3})$ is triclinic, and (2) $\bd_{2}$ is orthotropic.
\end{thm}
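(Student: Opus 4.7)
The plan is a two-step reduction exploiting first the integrity basis structure of $\HH^{4}(\RR^{3})$ and then a linear-algebra computation on $\HH^{2}(\RR^{3})$. Let $\bE_{1},\bE_{2}\in\Ela$ satisfy the genericity hypotheses and share the values of the 21 listed invariants. Since $I_{2},\dotsc,I_{10}$ separate orbits in $\HH^{4}(\RR^{3})$ by Theorem~\ref{thm:IB2-H4}, agreement of these nine invariants gives $\bH_{1}=g\star\bH_{2}$ for some $g\in\OO(3)$. Replacing $\bE_{2}$ by $g\star\bE_{2}$ (whose 21 invariants are unchanged), we may assume $\bH_{1}=\bH_{2}=:\bH$; equality of the scalars $\lambda,\mu$ is immediate. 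It then suffices to prove $\bd_{1}^{\prime}=\bd_{2}^{\prime}$ and $\bv_{1}^{\prime}=\bv_{2}^{\prime}$.

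Setting $T_{3}:=\bd_{2}$, $T_{4}:=\bd_{3}$, $T_{5}:=\bd_{2}^{2}$, $T_{6}:=(\bd_{2}\bd_{3})^{s}$, $T_{11}:=[\bd_{2},\bd_{3}]^{2}$ --- five symmetric covariants of $\bH$, hence common to both tensors after the reduction --- the remaining invariant equalities translate into
\begin{equation*}
(\bd_{1}^{\prime}-\bd_{2}^{\prime})\2dots T_{k}=0, \qquad (\bv_{1}^{\prime}-\bv_{2}^{\prime})\2dots T_{k}=0,
\end{equation*}
for $k\in\{3,4,5,6,11\}$. Since $\bd_{1}^{\prime}-\bd_{2}^{\prime}$ and $\bv_{1}^{\prime}-\bv_{2}^{\prime}$ belong to $\HH^{2}(\RR^{3})$, a 5-dimensional space carrying the non-degenerate bilinear form $\2dots$, the desired vanishing follows once one shows that the orthogonal projections of $T_{3},T_{4},T_{5},T_{6},T_{11}$ onto $\HH^{2}(\RR^{3})$ span the whole space.

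For this spanning claim, I would diagonalize the orthotropic tensor $\bd_{2}=\mathrm{diag}(\alpha_{1},\alpha_{2},\alpha_{3})$ (distinct $\alpha_{i}$) and denote the off-diagonal entries of $\bd_{3}$ in this eigenbasis by $\beta_{ij}$. Then $\bd_{2}$ and $\bd_{2}^{2}$ are diagonal, while $\bd_{3}$, $(\bd_{2}\bd_{3})^{s}$, and $[\bd_{2},\bd_{3}]^{2}$ each carry both diagonal and off-diagonal parts. The off-diagonal subspace of $\HH^{2}(\RR^{3})$ is 3-dimensional, and a direct computation (carried out most cleanly via the row reduction $R_{2}\to R_{2}-(\alpha_{1}+\alpha_{2}+\alpha_{3})R_{1}$ followed by cofactor expansion along the last row) evaluates the determinant of the $3\times 3$ matrix $M$ of off-diagonal entries as
\begin{equation*}
\det M = \Big(\prod_{i<j}(\alpha_{i}-\alpha_{j})\Big)\cdot\big(\beta_{12}^{2}\beta_{13}^{2}+\beta_{12}^{2}\beta_{23}^{2}+\beta_{13}^{2}\beta_{23}^{2}\big).
\end{equation*}
Orthotropy makes the Vandermonde factor nonzero; triclinicity of $(\bd_{2},\bd_{3})$ --- checking the action on $\bd_{3}$ of each of the three nontrivial elements of the orthotropy group of $\bd_{2}$, modulo the center --- forces at most one of $\beta_{12},\beta_{13},\beta_{23}$ to vanish, so the second factor is strictly positive. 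Combined with the Vandermonde independence of $\bd_{2},\bd_{2}^{2}$ modulo the trace line (which shows the 2D diagonal-traceless subspace is also spanned by $T_{3},T_{5}$ once off-diagonal contributions from $T_{4},T_{6},T_{11}$ are eliminated by the off-diagonal independence), the five projections span $\HH^{2}(\RR^{3})$, and the theorem follows.

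The main obstacle is exhibiting the clean factorization of $\det M$ above: a naive expansion produces an unwieldy polynomial in the $\alpha_{i}$ and the six components of $\bd_{3}$, and one needs the right row reduction to expose the Vandermonde factor and reduce the matter to a small determinant in the $\beta_{ij}$. A minor but essential technical point is the translation of the intrinsic triclinic-pair condition into the concrete statement ``at most one $\beta_{ij}$ vanishes'' in the chosen basis; this follows from the explicit description of the orthotropy group of $\bd_{2}$ as diagonal sign changes in its eigenbasis, together with a case-by-case check of which $\beta_{ij}$ each reflection flips.
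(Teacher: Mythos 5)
Your proposal is correct and follows essentially the same route as the paper: reduce to a common $\bH$ via the integrity basis of $\HH^{4}(\RR^{3})$, then show that the deviatoric parts of $\bq,\bd_{2},\bd_{3},\bd_{2}^{2},(\bd_{2}\bd_{3})^{s},[\bd_{2},\bd_{3}]^{2}$ span $\Sym^{2}(\RR^{3})$ by diagonalizing the orthotropic $\bd_{2}$ and factoring the off-diagonal $3\times 3$ determinant into a Vandermonde factor times $\beta_{12}^{2}\beta_{13}^{2}+\beta_{12}^{2}\beta_{23}^{2}+\beta_{13}^{2}\beta_{23}^{2}$, exactly as in the paper's Lemma~\ref{lem:basis-ab}. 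The only cosmetic difference is that you conclude via orthogonality of the difference $\bd_{1}^{\prime}-\bd_{2}^{\prime}$ to a spanning set, whereas the paper inverts the Gram matrix to express the components of $\bd^{\prime},\bv^{\prime}$ rationally in the invariants; these are equivalent.
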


\begin{rem}
  Note that if condition (1) is satisfied, then, either $\bd_{2}$ or $\bd_{3}$ is orthotropic by Lemma~\ref{lem:basis-ab}. Thus, one could omit condition (2) (as in \cite{BKO1994}) and formulate a new separating result on this larger Zariski open set. However, the price to pay is to add the two invariants $\bd^{\prime}:\bd_{3}^{2}$ and $\bv^{\prime}:\bd_{3}^{2}$ to the list in Theorem~\ref{thm:main}, increasing its cardinal from 21 to 23 (but still below the 39 invariants of~\cite{BKO1994}). Indeed, $\bd_{3}$ can play the role of $\bd_{2}$ in the proof of Theorem~\ref{thm:main}, in that case.
\end{rem}

The proof of Theorem~\ref{thm:main} is based on the following lemma.

\begin{lem}\label{lem:basis-ab}
  Let $(\ba, \bb)$ be a triclinic pair of symmetric second-order tensors. Then at least one of them is orthotropic, say $\ba$, and in that case
  \begin{equation*}
    \mathcal{B} = \left(\bq, \ba, \bb, \ba^{2}, (\ba \bb)^{s}, [\ba, \bb]^{2}\right)
  \end{equation*}
  is a basis of $\Sym^{2}(\RR^{3})$, the space of symmetric second-order tensors.
\end{lem}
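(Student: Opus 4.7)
The plan is to reduce the statement to a $3\times 3$ determinant computation in the eigenbasis of $\ba$ after first settling, by a symmetry argument, which of the two tensors is orthotropic.

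First I would show that triclinicity of the pair forces at least one of $\ba,\bb$ to have three distinct eigenvalues. Suppose both have a repeated eigenvalue. If either is isotropic, the common $\SO(3)$-stabilizer of the pair contains the stabilizer of the other, which is at least $\OO(2)$, contradicting triclinicity. Otherwise both are transversely isotropic with axes $\nn_a$ and $\nn_b$: if the axes are parallel the pair has a shared $\OO(2)$-symmetry, and if they are not parallel, the half-turn $R_\pi$ around $\nn_a\times\nn_b$ reverses both axes and therefore fixes both $\ba$ and $\bb$. In every case the pair fails to be triclinic, so (up to relabeling) $\ba$ is orthotropic. This symmetry analysis is the main conceptual step.

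Next, work in an orthonormal eigenbasis $(e_1,e_2,e_3)$ of $\ba$ with distinct eigenvalues $\alpha_1,\alpha_2,\alpha_3$, and split $\Sym^2(\RR^3)=V\oplus W$ into diagonal and off-diagonal symmetric matrices. The three tensors $\bq,\ba,\ba^2$ lie in $V$ and are linearly independent by a Vandermonde argument on the $\alpha_i$. It therefore suffices to prove that the projections onto $W$ of $\bb,(\ba\bb)^s,[\ba,\bb]^2$ are linearly independent. Writing $b_{ij}$ for the components of $\bb$ in this basis, an entrywise computation gives, for $i\neq j$ with $k$ the third index,
\begin{equation*}
(\ba\bb)^s_{ij}=(\alpha_i+\alpha_j)\,b_{ij}, \qquad [\ba,\bb]^2_{ij}=(\alpha_i-\alpha_k)(\alpha_k-\alpha_j)\,b_{ik}b_{kj}.
\end{equation*}
Expanding the $3\times 3$ determinant along the last row and collecting Vandermonde factors, it simplifies to
\begin{equation*}
-\,(\alpha_1-\alpha_2)(\alpha_2-\alpha_3)(\alpha_3-\alpha_1)\bigl(b_{12}^2 b_{13}^2+b_{12}^2 b_{23}^2+b_{13}^2 b_{23}^2\bigr).
\end{equation*}

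Finally, I would invoke triclinicity a second time to show this determinant is nonzero. The Vandermonde prefactor is nonzero since $\ba$ is orthotropic, so vanishing would require at most one of $b_{12},b_{13},b_{23}$ to be nonzero. But the $\SO(3)$-stabilizer of the orthotropic $\ba$ is the Klein four-group of half-turns about the $e_i$, and $R_\pi^{e_i}$ fixes $\bb$ precisely when the two off-diagonal entries with index $i$ both vanish; triclinicity rules out all three of these, so at most one of the $b_{ij}$ can be zero. Hence the bracket is strictly positive, the determinant is nonzero, and the projections of $\bb,(\ba\bb)^s,[\ba,\bb]^2$ span $W$, so $\mathcal{B}$ is a basis of $\Sym^2(\RR^3)$. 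The remaining computation beyond the first symmetry step is routine.
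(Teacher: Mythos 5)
Your proof is correct and follows essentially the same route as the paper's: both reduce to an orthonormal eigenbasis of the orthotropic tensor $\ba$, split $\Sym^{2}(\RR^{3})$ into diagonal and off-diagonal parts (with $\bq,\ba,\ba^{2}$ spanning the diagonal part by Vandermonde), and evaluate the same $3\times 3$ determinant $(\lambda_{2}-\lambda_{1})(\lambda_{3}-\lambda_{1})(\lambda_{3}-\lambda_{2})\left(b_{12}^{2}b_{13}^{2}+b_{12}^{2}b_{23}^{2}+b_{13}^{2}b_{23}^{2}\right)$, using triclinicity to rule out the vanishing of the second factor. The only difference is cosmetic: you argue via explicit half-turn stabilizers where the paper invokes the existence of a common eigenvector, and you are in fact slightly more careful than the paper in noting that nonvanishing of the determinant requires both factors, not just orthotropy of $\ba$.
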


\begin{proof}
  Note first that $\ba$ and $\bb$ cannot be both transversely isotropic (\textit{i.e.} having both only two different eigenvalues), otherwise the pair $(\ba,\bb)$ would have necessarily a common eigenvector and would be not be triclinic. Suppose thus that $\ba$ is orthotropic. Without loss of generality, we can assume that $\ba = \mathrm{diag}(\lambda_{1}, \lambda_{2},\lambda_{3})$ is diagonal with $\lambda_i\neq \lambda_j$ for $i\neq j$. But then, $(\bq, \ba, \ba^{2})$ is a basis of the space of diagonal matrices, noted $\mathrm{Diag}$, and therefore $\mathcal{B}$ contains $\be_{11}, \be_{22}, \be_{33}$ where
  \begin{equation*}
    \be_{ij} =
    \begin{cases}
      \eee_{i}\otimes\eee_{i},\quad \text{if}\ i=j                             \\
      \eee_{i}\otimes\eee_{j}+\eee_{j}\otimes\eee_{i},\quad \text{if}\ i\neq j \\
    \end{cases}
  \end{equation*}
  We will now show that $\mathcal{B}$ contains also $\be_{12}, \be_{13}, \be_{23}$. Let's write
  \begin{equation*}
    \bb = x\, \be_{23} + y\, \be_{13} + z\, \be_{12}, \mod \mathrm{Diag}.
  \end{equation*}
  where modulo $\mathrm{Diag}$ means that the equality holds up to a diagonal matrix that we don't need to precise.
  We cannot have $(x, y)=(0,0)$, nor $(x, z)=(0,0)$, nor $(y, z)=(0,0)$, otherwise $\ba$ and $\bb$ would share a common eigenvector and would not be triclinic. We have then
  \begin{equation*}
    (\ba \bb)^{s} = (\lambda_{2} + \lambda_{3})x \, \be_{23} + (\lambda_{1} + \lambda_{3})y \, \be_{13} + (\lambda_{1} + \lambda_{2})z \, \be_{12}, \mod \mathrm{Diag}.
  \end{equation*}
  and
  \begin{multline*}
    [\ba, \bb]^{2} = ((\lambda_{1} - \lambda_{2})\lambda_{3} + \lambda_{1}\lambda_{2} - \lambda_{1}^{2})yz \, \be_{23}
    +  ((\lambda_{2} - \lambda_{1}) \lambda_{3} - \lambda_{2}^{2} + \lambda_{1} \lambda_{2})xz \, \be_{13} \\
    + (- \lambda_{3}^{2} + (\lambda_{2} + \lambda_{1} )\lambda_{3} - \lambda_{1} \lambda_{2})xy \, \be_{12}, \mod \mathrm{Diag}.
  \end{multline*}
  The question is then reduced to check whether $\bb$, $(\ba \bb)^{s}$ and $[\ba, \bb]^{2}$ are linearly independent modulo $\mathrm{Diag}$. To do so, we calculate the determinant of the matrix
  \begin{equation*}
    M =
    \begin{pmatrix}
      b_{23} & {(\ba \bb)^{s}}_{23} & {[\ba, \bb]^{2}}_{23} \\
      b_{13} & {(\ba \bb)^{s}}_{13} & {[\ba, \bb]^{2}}_{13} \\
      b_{12} & {(\ba \bb)^{s}}_{12} & {[\ba, \bb]^{2}}_{12} \\
    \end{pmatrix}
  \end{equation*}
  and find
  \begin{equation*}
    \det M = (\lambda_{2}-\lambda_{1}) (\lambda_{3}-\lambda_{1}) (\lambda_{3}-\lambda_{2}) \left(x^{2}y^{2} + y^{2}z^{2} + z^{2}x^{2}\right),
  \end{equation*}
  which does not vanish since $\ba$ is orthotropic. This achieves the proof.
\end{proof}

\begin{proof}[Proof of Theorem~\ref{thm:main}]
  Let $\bE = (\lambda, \mu, \bd^{\prime}, \bv^{\prime}, \bH)$ be an elasticity tensor satisfying the conditions (1) and (2) of Theorem~\ref{thm:main}. Then, by Lemma~\ref{lem:basis-ab},
  \begin{equation*}
    \mathcal{B} = \left(\bq, \bd_{2}, \bd_{3}, {\bd_{2}}^{2}, (\bd_{2}\bd_{3})^{s}, [\bd_{2}, \bd_{3}]^{2}\right)
  \end{equation*}
  is a basis of $\Sym^{2}(\RR^{3})$. Thus, if we set
  \begin{equation*}
    \pmb\epsilon_{1} := \bd_{2}, \quad \pmb\epsilon_{2} := \bd_{3}, \quad \pmb\epsilon_{3} := {\bd_{2}}^{2}, \quad \pmb\epsilon_{4} := (\bd_{2}\bd_{3})^{s}, \quad \pmb\epsilon_{5} := [\bd_{2}, \bd_{3}]^{2},
  \end{equation*}
  and define $\pmb\epsilon^{\prime}$ as the deviatoric part of $\pmb\epsilon$, then, $\mathcal{B^{\prime}} = (\pmb\epsilon^{\prime}_{\alpha})$ is a basis of the 5-dimensional vector space $\HH^{2}(\RR^{3})$, \textit{i.e.} of the space of deviatoric second-order tensors. In particular, the second-order harmonic components $(\bd^{\prime}, \bv^{\prime})$ of $\bE$ can be expressed in this basis as
  \begin{equation*}
    \bd^{\prime}  = \sum_{\alpha=1}^{5} d^{\prime}_{\alpha}\pmb\epsilon^{\prime}_{\alpha}, \qquad \bv^{\prime} = \sum_{\alpha=1}^{5} v^{\prime}_{\alpha}\pmb\epsilon^{\prime}_{\alpha}.
  \end{equation*}
  We will now show that the components $d^{\prime}_{\alpha}$ and $v^{\prime}_{\alpha}$ are rational expressions of the polynomial invariants $I_{k}$, $D_{k}$ and $V_{k}$ introduced in Theorem~\ref{thm:main}. To do so, we shall introduce the \emph{Gram matrix} $G = (G_{\alpha\beta})$, where
  \begin{equation*}
    G_{\alpha\beta} = \pmb\epsilon^{\prime}_{\alpha} \2dots \pmb\epsilon^{\prime}_{\beta}
  \end{equation*}
  are the components of the canonical scalar product on $\HH^{2}(\RR^{3})$ in this basis. Note that $G$ is positive definite and that its components are polynomial invariants of $\bH$. They can thus be expressed as polynomial functions of the invariants $I_{2}, \dotsc, I_{10}$, which form an integrity basis of $\RR[\HH^{4}]^{\OO(3)}$. Now, we have
  \begin{equation*}
    \bd^{\prime} \2dots \pmb\epsilon^{\prime}_{\beta} = \sum_{\alpha=1}^{5} d^{\prime}_{\alpha} G_{\alpha\beta}, \qquad \bv^{\prime} \2dots \pmb\epsilon^{\prime}_{\beta} = \sum_{\alpha=1}^{5} v^{\prime}_{\alpha} G_{\alpha\beta},
  \end{equation*}
  and since
  \begin{equation*}
    \bd^{\prime}:\pmb\epsilon^{\prime} = \bd^{\prime}:\pmb\epsilon, \quad \text{and} \quad \bv^{\prime}:\pmb\epsilon^{\prime} = \bv^{\prime}:\pmb\epsilon,
  \end{equation*}
  we get
  \begin{equation*}
    (D_{3} \; D_{4} \; D_{5} \; D_{6} \; D_{11}) = (d^{\prime}_{1} \; d^{\prime}_{2} \; d^{\prime}_{3} \; d^{\prime}_{4} \; d^{\prime}_{5})G,
  \end{equation*}
  and
  \begin{equation*}
    (V_{3} \; V_{4} \; V_{5} \; V_{6} \; V_{11}) = (v^{\prime}_{1} \; v^{\prime}_{2} \; v^{\prime}_{3} \; v^{\prime}_{4} \; v^{\prime}_{5})G.
  \end{equation*}

  Inverting these linear systems, we deduce that $d^{\prime}_{\alpha}$ and $v^{\prime}_{\alpha}$ are rational expressions of $I_{k}$, $D_{k}$ and $V_{k}$, where the common denominator $\det G$ depends only on the $I_{k}$. Consider now two generic elasticity tensors
  \begin{equation*}
    \bE = (\lambda, \mu, \bd^{\prime}, \bv^{\prime}, \bH), \quad \text{and } \quad \overline{\bE} = (\overline{\lambda}, \overline{\mu}, \overline{\bd}^{\prime}, \overline{\bv}^{\prime}, \overline{\bH})
  \end{equation*}
  for which the 21 invariants defined in Theorem~\ref{thm:main} are the same. Then, by Theorem~\ref{thm:IB2-H4} and Remark~\ref{rem:interity-separting}, there exists $g\in\OO(3)$ such that
  \begin{equation*}
    \overline{\bH} = g \star \bH .
  \end{equation*}
  We get thus
  \begin{equation*}
    \overline{\bd}_{2} = g \star \bd_{2}, \qquad \overline{\bd}_{3} = g \star \bd_{3}.
  \end{equation*}
  Hence the two bases of $\Sym^{2}(\RR^{3})$, $(\pmb\epsilon^{\prime}_{\alpha}(\bH))$ and $(\pmb\epsilon^{\prime}_{\alpha}(\overline{\bH}))$ are related by $g$
  \begin{equation*}
    \pmb\epsilon^{\prime}_{\alpha}(\overline{\bH}) = g \star \pmb\epsilon^{\prime}_{\alpha}(\bH),
  \end{equation*}
  and the corresponding Gram matrices are equal, $\overline{G}=G$. Moreover, the components of $\bd^{\prime}$, $\bv^{\prime}$ in $(\pmb\epsilon^{\prime}_{\alpha}(\bH))$ and the components of $\overline{\bd}^{\prime}$, $\overline{\bv}^{\prime}$ in $(\pmb\epsilon^{\prime}_{\alpha}(\overline{\bH}))$ are the same (since the invariants $D_{k}$ and $V_{k}$ have the same value on both tensors). Therefore, we have
  \begin{equation*}
    \overline{\bd}^{\prime} = g \star \bd^{\prime}, \qquad \overline{\bv}^{\prime} = g \star \bv^{\prime}.
  \end{equation*}
  Finally, since $\overline{\lambda} = \lambda$ and $\overline{\mu} = \mu$, we get
  \begin{equation*}
    \overline{\bE} = (\overline{\bH}, \overline{\bd}^{\prime}, \overline{\bv}^{\prime}, \overline{\lambda}, \overline{\mu}) = (g \star \bH, g \star \bd^{\prime}, g \star \bv^{\prime}, \lambda, \mu) = g \star \bE,
  \end{equation*}
  which achieves the proof.
\end{proof}

Note that in the proof of Theorem~\ref{thm:main}, the nine invariants $I_{k}$ were only used to separate the fourth-order harmonic tensors $\bH$ and $\overline{\bH}$. Thus these nine invariants can be substituted by any other separating set for $\HH^{4}(\RR^3)$ without changing the final result. In Appendix~\ref{sec:Maeda-invariants}, we provide a set of 6 separating rational invariants for $\HH^{4}(\RR^3)$
\begin{equation*}
  i_{2}, \quad i_{3}, \quad i_{4}, \quad  k_{4},  \quad k_{8},  \quad k_{9},
\end{equation*}
obtained by translating the 6 generators of the field of rational invariants of the binary octavic calculated by Maeda in~\cite{Mae1990}. We get therefore the following first corollary.

\begin{cor}\label{cor:main-18}
  The following 18 rational invariants
  \begin{gather*}
    \lambda, \quad \mu , \quad i_{2}, \quad i_{3}, \quad i_{4}, \quad k_{4},  \quad k_{8},  \quad k_{9},
    \\
    D_{3}, \quad D_{4}, \quad D_{5}, \quad D_{6}, \quad D_{11}, \quad V_{3}, \quad V_{4}, \quad V_{5}, \quad V_{6}, \quad V_{11}
  \end{gather*}
  separate generic tensors $\bE = (\lambda, \mu, \bd^{\prime}, \bv^{\prime}, \bH)$, satisfying the following conditions: (1) the pair $(\bd_{2},\bd_{3})$ is triclinic, and (2) $\bd_{2}$ is orthotropic.
\end{cor}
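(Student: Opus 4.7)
The plan is to mirror the proof of Theorem~\ref{thm:main} almost verbatim, replacing only the use of Theorem~\ref{thm:IB2-H4} by its rational analogue established in Appendix~\ref{sec:Maeda-invariants}. The key observation, already flagged in the paragraph preceding the statement, is that in the proof of Theorem~\ref{thm:main} the nine polynomial invariants $I_{2}, \dotsc, I_{10}$ were used \emph{only} to produce, via Theorem~\ref{thm:IB2-H4} and Remark~\ref{rem:interity-separting}, a rotation $g \in \OO(3)$ satisfying $\overline{\bH} = g \star \bH$. All subsequent steps --- building the basis $\mathcal{B}^{\prime}$ from $\bd_{2}, \bd_{3}$ via Lemma~\ref{lem:basis-ab}, and recovering the coordinates of $\bd^{\prime}, \bv^{\prime}$ in $\mathcal{B}^{\prime}$ by inverting the Gram matrix $G$ --- depend only on the existence of such a $g$ and on the scalar invariants $\lambda, \mu, D_{k}, V_{k}$. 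Hence \emph{any} separating set (polynomial or rational) for $\HH^{4}(\RR^{3})$ can be substituted for the $I_{k}$'s, and shortening the separating set for $\HH^{4}(\RR^{3})$ automatically shortens the one for $\Ela$.

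Concretely, I would argue as follows. Let $\bE = (\lambda, \mu, \bd^{\prime}, \bv^{\prime}, \bH)$ and $\overline{\bE} = (\overline{\lambda}, \overline{\mu}, \overline{\bd}^{\prime}, \overline{\bv}^{\prime}, \overline{\bH})$ be two generic elasticity tensors (satisfying conditions (1) and (2)) on which the 18 rational invariants of the statement agree. First, apply the separation result of Appendix~\ref{sec:Maeda-invariants} to the six Maeda-type rational invariants $i_{2}, i_{3}, i_{4}, k_{4}, k_{8}, k_{9}$ to obtain $g \in \OO(3)$ with $\overline{\bH} = g \star \bH$. Covariance then yields $\overline{\bd}_{2} = g \star \bd_{2}$ and $\overline{\bd}_{3} = g \star \bd_{3}$, so by Lemma~\ref{lem:basis-ab} the two deviatoric bases $\mathcal{B}^{\prime}(\bH)$ and $\mathcal{B}^{\prime}(\overline{\bH})$ are related by $g$ and their Gram matrices coincide. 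Inverting $G$ (positive definite since $\mathcal{B}^{\prime}$ is a basis) together with the equality of $D_{3}, \dotsc, D_{11}, V_{3}, \dotsc, V_{11}$ forces $\overline{\bd}^{\prime} = g \star \bd^{\prime}$ and $\overline{\bv}^{\prime} = g \star \bv^{\prime}$; combined with $\overline{\lambda} = \lambda$ and $\overline{\mu} = \mu$, we obtain $\overline{\bE} = g \star \bE$.

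The only non-trivial ingredient is the claim that $i_{2}, i_{3}, i_{4}, k_{4}, k_{8}, k_{9}$ form a separating set of rational invariants on $\HH^{4}(\RR^{3})$, which is the content of Appendix~\ref{sec:Maeda-invariants} and reduces to Maeda's theorem~\cite{Mae1990} on the binary octavic via the isomorphism between harmonic tensors and binary forms recalled in Appendix~\ref{sec:rational-invariants}. This is the main obstacle, but it is handled independently of the elasticity setting; once granted, the derivation of the corollary from the proof of Theorem~\ref{thm:main} amounts to a direct substitution.
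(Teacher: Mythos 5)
Your proposal is correct and follows exactly the paper's own argument: the paper proves the corollary by observing that the nine invariants $I_{2},\dotsc,I_{10}$ served only to separate $\bH$ and $\overline{\bH}$ in the proof of Theorem~\ref{thm:main}, so they may be replaced by the six rational invariants of Theorem~\ref{thm:Maeda-H4}, with the rest of the argument unchanged. Your additional remark that the Gram-matrix step depends only on the existence of the rotation $g$ and on $\lambda,\mu,D_{k},V_{k}$ is precisely the justification the paper gives.
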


In Theorem~\ref{thm:Maeda-H4}, it can be observed that the denominator of each rational invariant
\begin{equation*}
  i_{2}, \quad i_{3}, \quad i_{4}, \quad k_{4},  \quad k_{8},  \quad k_{9},
\end{equation*}
is a power of the polynomial invariant of degree 12
\begin{equation*}
  M_{12} := \norm{{\bd_{2}}^{2} \times \bd_{2}}^{2}.
\end{equation*}
where the generalized cross product $\times$ was defined in~\eqref{eq:cross-product}. Besides, it was shown in~\cite[Lemma 8.1]{OKDD2018a} that ${\bd_{2}}^{2} \times \bd_{2} \ne 0$ if and only if $\bd_{2}$ is orthotropic. We have thus the following second corollary.

\begin{cor}\label{cor:main-19}
  The following 19 polynomial invariants
  \begin{gather*}
    \lambda, \qquad \mu, \qquad M_{12}
    \\
    {K_{14}} := M_{12}\, i_{2}, \qquad  K_{27} := {M_{12}}^{2}\, i_{3},  \qquad  K_{40i} := {M_{12}}^{3} \, i_{4},
    \\
    K_{40k} := {M_{12}}^{3}\, k_{4}, \qquad K_{80} := {M_{12}}^{6}\, k_{8},  \qquad  K_{93} := {M_{12}}^{7}\, k_{9},
    \\
    D_{3}, \quad D_{4}, \quad D_{5}, \quad D_{6}, \quad D_{11}, \quad  V_{3}, \quad V_{4}, \quad V_{5}, \quad V_{6}, \quad  V_{11},
  \end{gather*}
  separate generic tensors {$\bE = (\lambda, \mu, \bd^{\prime}, \bv^{\prime}, \bH)$}, satisfying the following conditions: (1) the pair $(\bd_{2},\bd_{3})$ is triclinic, and (2) $\bd_{2}$ is orthotropic.
\end{cor}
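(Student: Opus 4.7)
The plan is to reduce Corollary \ref{cor:main-19} to Corollary \ref{cor:main-18} by exploiting the genericity hypothesis to invert the rescaling by powers of $M_{12}$. I would begin by observing the key consequence of the cited result from~\cite[Lemma 8.1]{OKDD2018a}: under condition (2), $\bd_{2}$ is orthotropic, hence ${\bd_{2}}^{2} \times \bd_{2} \ne 0$, so the polynomial invariant $M_{12} = \norm{{\bd_{2}}^{2} \times \bd_{2}}^{2}$ is strictly positive on the Zariski open set of tensors considered in Theorem~\ref{thm:main}. Consequently, dividing by powers of $M_{12}$ is a legitimate operation on the generic stratum.

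Next, I would take two generic elasticity tensors $\bE$ and $\overline{\bE}$ (both satisfying conditions (1) and (2)) on which the 19 polynomial invariants of the statement coincide. In particular $M_{12}(\bE) = M_{12}(\overline{\bE})$, and this common value is non-zero. Since
\begin{equation*}
  i_{2} = \frac{K_{14}}{M_{12}}, \quad i_{3} = \frac{K_{27}}{M_{12}^{2}}, \quad i_{4} = \frac{K_{40i}}{M_{12}^{3}}, \quad k_{4} = \frac{K_{40k}}{M_{12}^{3}}, \quad k_{8} = \frac{K_{80}}{M_{12}^{6}}, \quad k_{9} = \frac{K_{93}}{M_{12}^{7}},
\end{equation*}
the equality of the polynomial invariants $K_{\cdot}$ and of $M_{12}$ forces the equality of the six Maeda-type rational invariants $i_{2}, i_{3}, i_{4}, k_{4}, k_{8}, k_{9}$ at $\bE$ and $\overline{\bE}$. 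The remaining twelve invariants $\lambda, \mu, D_{3}, D_{4}, D_{5}, D_{6}, D_{11}, V_{3}, V_{4}, V_{5}, V_{6}, V_{11}$ already appear verbatim in both lists, so they match trivially.

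At this point all 18 rational invariants of Corollary~\ref{cor:main-18} agree on $\bE$ and $\overline{\bE}$, and both tensors lie in the same generic Zariski open set required by that corollary. Applying Corollary~\ref{cor:main-18} then yields a rotation $g \in \OO(3)$ with $\overline{\bE} = g \star \bE$, which is what must be shown. There is no real obstacle here beyond verifying that the genericity condition used in the present corollary coincides with the one used in Corollary~\ref{cor:main-18} (which it does, since conditions (1) and (2) are literally the same) and that the exponents of $M_{12}$ introduced in the definitions of $K_{14}, K_{27}, K_{40i}, K_{40k}, K_{80}, K_{93}$ are indeed the correct ones to clear the denominators appearing in Theorem~\ref{thm:Maeda-H4}; this last bookkeeping is the only technical point to double-check, and it is purely routine given the explicit formulas in Appendix~\ref{sec:Maeda-invariants}.
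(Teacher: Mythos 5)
Your proposal is correct and follows essentially the same route as the paper, which justifies the corollary precisely by the observation preceding it: the denominators of $i_{2},i_{3},i_{4},k_{4},k_{8},k_{9}$ are the indicated powers of $M_{12}$, and $M_{12}\ne 0$ under condition (2) by \cite[Lemma 8.1]{OKDD2018a}, so the 19 polynomial invariants determine the 18 rational ones of Corollary~\ref{cor:main-18} and separation follows. Your write-up simply makes explicit the reduction that the paper leaves implicit.
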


\appendix

\section{Rational invariants}
\label{sec:rational-invariants}

In this appendix, we detail the link between polynomial and rational invariants of $\HH^{n}(\CC^{3})$ and the space of binary forms $\Sn{2n}$. Recall that a binary form $\ff$ of degree $k$ is a homogeneous complex polynomial in two variables $u,v$ of degree $k$:
\begin{equation*}
  \ff(\bxi) = a_{0}u^{k} + a_{1}u^{k-1}v + \dotsb + a_{k-1}uv^{k-1} + a_{k}v^{k},
\end{equation*}
where $\bxi = (u,v)$ and $a_{i}\in \CC$. The set of all binary forms of degree $k$, noted $\Sn{k}$, is a complex vector space of dimension $k + 1$. The special linear group
\begin{equation*}
  \SL(2,\CC) : =  \set{\gamma: =
    \begin{pmatrix}
      a & b \\
      c & d
    \end{pmatrix}
    ,\quad ad-bc = 1}
\end{equation*}
acts naturally on $\CC^{2}$ and induces a left action on $\Sn{k}$, given by
\begin{equation*}
  (\gamma \star \ff)(\bxi): = \ff(\gamma^{-1} \bxi),
\end{equation*}
where $\gamma\in \SL(2,\CC)$.

Binary forms of degree $2n$ are closely related to harmonic tensors of degree $n$ (we refer to~\cite{OKA2017,OKDD2018a} for more details) in the following way. Every totally symmetric tensor $\bS$ of order $n$ defines an homogeneous polynomial of degree $n$
\begin{equation*}
  \rp(\xx) = \bS(\xx, \dotsc, \xx)
\end{equation*}
which can be seen to be an isomorphism. In this correspondence, harmonic tensors (with vanishing traces) correspond to harmonic polynomials (with vanishing Laplacian). Now, there is an equivariant isomorphism between the space $\Hn{n}(\CC^{3})$ of complex harmonic polynomials of degree $n$ and binary forms of degree $2n$. This isomorphism is induced by the \emph{Cartan map}
\begin{equation}\label{eq:Cartan-map}
  \phi : \CC^{2} \to \CC^{3}, \qquad (u,v) \mapsto \left( \frac{u^{2} + v^{2}}{2}, \frac{u^{2} - v^{2}}{2i}, iuv \right),
\end{equation}
and is given by
\begin{equation*}
  \phi^{*} : \Hn{n}(\CC^{3}) \to \Sn{2n}, \qquad \rh \mapsto \rh \circ \phi .
\end{equation*}
This isomorphism is moreover $\SL(2,\CC)$-equivariant. Indeed, the adjoint representation $\Ad$ of $\SL(2, \CC)$ on its Lie algebra $\slc(2, \CC)$ (which is isomorphic to $\CC^{3}$), preserves the quadratic form $\det m$, where $m \in \slc(2, \CC)$, and induces a group morphism from $\SL(2, \CC)$ to
\begin{equation*}
  \SO(3,\CC) : =  \set{P \in \mathrm{M}_{3}(\CC); \; P^{t}P = \id ,\, \det P = 1}.
\end{equation*}
The isomorphism $\phi^{*}$ between $\Hn{n}(\CC^{3})$ and $\Sn{2n}$ is thus equivariant in the following sense:
\begin{equation*}
  \phi^{*}( \Ad_{\gamma} \star \rh) = \gamma \star \phi^{*}(\rh), \qquad \rh \in \Hn{n}(\CC^{3}), \, \gamma \in \SL(2, \CC),
\end{equation*}
and the invariant algebras $\CC[\Hn{n}(\CC^{3})]^{\SO(3,\CC)}$ and $\CC[\Sn{2n}]^{\SL(2, \CC)}$ are isomorphic.

\begin{defn}
  The \emph{transvectant} of index $r$ of two binary forms $\ff\in \Sn{p}$ and $\bg\in \Sn{q}$ is defined as
  \begin{equation}\label{eq:Transv}
    \trans{\ff}{\bg}{r} = \frac{(p-r)!(q-r)!}{p! q!} \sum_{i = 0}^{r}(-1)^{i} \binom{r}{i} \frac{\partial^{r} \ff}{\partial u^{r-i} \partial v^{i}} \frac{\partial^{r} \bg}{\partial u^{i} \partial v^{r-i}},
  \end{equation}
  which is a binary form of degree $p + q - 2r$ (which vanishes if $r > \min(p,q)$).
\end{defn}

The invariant algebra of $\Sn{n}$ is generated by iterated \emph{transvectants}~\cite{Olv1999}. The tensorial operations between \emph{totally symmetric tensors}, introduced in the notations section, allow to traduce these transvectants into tensorial operations. Each of them has a polynomial counterpart (see~\cite{OKDD2018a}), which we detail below. In what follows, totally symmetric tensors $\bS^{1}, \bS^{2}$, of respective order $n_{1}$, $n_{2}$, correspond to the polynomials $\rp_{1}, \rp_{2}$, of respective degree $n_{1}$, $n_{2}$.

\begin{itemize}
  \item The \emph{symmetric tensor product}~\eqref{eq:symmetric-tensor-product} $\bS^{1} \odot \bS^{2}$ corresponds to the standard product of polynomials
        \begin{equation*}
          \rp = \rp_{1}\, \rp_{2}.
        \end{equation*}
  \item The \emph{symmetric $r$-contraction}~\eqref{eq:symmetric-r-contraction} $\bS^{1} \symrdots{r} \bS^{2}$ corresponds to the polynomial
        \begin{equation*}
          \rp = \frac{(n_{1}-r)!}{n_{1}!}\frac{(n_{2}-r)!}{n_{2}!} \sum_{k_{1}+k_{2}+k_{3}=r} \frac{r!}{k_{1}!k_{2}!k_{3}!}\frac{\partial^r \rp_{1}}{\partial x^{k_{1}}\partial y^{k_{2}}\partial z^{k_{3}}}\frac{\partial^r \rp_{2}}{\partial x^{k_{1}}\partial y^{k_{2}}\partial z^{k_{3}}}.
        \end{equation*}
  \item The \emph{generalized cross product}~\eqref{eq:cross-product} $\bS^{1}\times\bS^{2}$ corresponds to the polynomial
        \begin{equation*}
          \rp = \frac{1}{n_{1}n_{2}}\det(\xx,\nabla \rp_{1}, \nabla \rp_{2}),
        \end{equation*}
        where $\nabla \rp$ is the gradient of $\rp$.
  \item The \emph{harmonic product}~\eqref{eq:harmonic-product} $\bH^{1} \ast \bH^{2}$ corresponds to the polynomial
        \begin{equation*}
          \rp = (\rp_{1}\, \rp_{2})^{\prime}.
        \end{equation*}
\end{itemize}

Using these operations and the Cartan map~\eqref{eq:Cartan-map}, we can translate the transvectants as binary operations between tensors. In the following proposition we have made no difference between an harmonic tensor $\bH$ and its polynomial counterpart (which is an abuse of notation). Moreover, the trace of a symmetric tensor of order $n$ is defined as the contraction between any two indices.

\begin{prop}\label{prop:trad-transvectants}
  Let $\bF \in \HH^{p}(\CC^{3})$ and $\bG \in \HH^{q}(\CC^{3})$ be two harmonic tensors and set $\ff := \phi^{*}\bF$ and $\bg := \phi^{*}\bG$. Then we have
  \begin{equation}\label{eq:even-order-transvectant}
    \trans{\ff}{\bg}{2r} =  2^{-r}\phi^\ast(\bF \symrdots{r}\bG)^{\prime}
  \end{equation}
  and
  \begin{equation}\label{eq:odd-order-transvectant}
    \trans{\ff}{\bg}{2r+1} = \kappa(p,q,r) \phi^\ast (\tr^r(\bF \times \bG))^{\prime}
  \end{equation}
  where
  \begin{equation*}
    \kappa(p,q,r) = \frac{1}{2^{2r+1}} \frac{(p+q-1)! (p-r-1)!(q-r-1)!}{(p+q-1-2r)! (p-1)! (q-1)!}.
  \end{equation*}
\end{prop}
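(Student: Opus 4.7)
I will prove both identities by combining a Schur-type uniqueness argument with a single normalization computation on highest-weight vectors. By the equivariance of the Cartan pullback $\phi^*$ stated earlier in this appendix together with the $\SO(3,\CC)$-equivariance of $\symrdots{r}$, $\times$, and $\tr$ (which becomes $\SL(2,\CC)$-equivariance through the adjoint cover $\Ad : \SL(2,\CC) \to \SO(3,\CC)$), both sides of each identity define $\SL(2,\CC)$-equivariant bilinear maps from $\Sn{2p}\otimes\Sn{2q}$ into a single irreducible target: $\Sn{2(p+q-2r)}$ in the even case and $\Sn{2(p+q-1-2r)}$ in the odd case. The Clebsch--Gordan decomposition $\Sn{2p}\otimes\Sn{2q} \simeq \bigoplus_{k=0}^{2\min(p,q)} \Sn{2p+2q-2k}$ is multiplicity-free, so the space of equivariant maps into any fixed irreducible factor is one-dimensional. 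It therefore suffices to verify each identity on one test pair $(\bF,\bG)$ for which both sides are non-zero; the scalar is then uniquely determined.

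\textbf{Key vanishing.} A direct expansion of~\eqref{eq:Cartan-map} gives $\phi_1^2+\phi_2^2+\phi_3^2 \equiv 0$, i.e.\ the Cartan map lands on the isotropic cone, hence $\phi^*(\bq)=0$. Writing any symmetric tensor via its full harmonic decomposition $\bS = \bS' + \bq \odot \bS'' + \bq^{\odot 2}\odot \bS''' + \cdots$, one concludes $\phi^*(\bS) = \phi^*(\bS')$. This legitimizes absorbing the leading harmonic projection $(\cdot)'$ into $\phi^*$ on both right-hand sides, so it is enough to compare $\trans{\ff}{\bg}{s}$ with $\phi^*$ of the full symmetric tensor.

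\textbf{Normalization and main obstacle.} For the test pair I take the highest-weight harmonic polynomials $\bF = (x_1+ix_2)^p$ and $\bG = (x_1-ix_2)^q$, both annihilated by $\Delta$. Using $\phi_1 \pm i\phi_2 = u^2,\, v^2$, one finds $\ff = u^{2p}$ and $\bg = v^{2q}$; since $\partial_v u^{2p}=0$ and $\partial_u v^{2q}=0$, only the $i=0$ term of~\eqref{eq:Transv} survives, yielding $\trans{u^{2p}}{v^{2q}}{s} = u^{2p-s}v^{2q-s}$ for every $s$. On the tensor side, the $x_3$-independence of $\bF,\bG$ collapses the polynomial formula for $\symrdots{r}$ to $\bF \symrdots{r}\bG = 2^r (x_1+ix_2)^{p-r}(x_1-ix_2)^{q-r}$, and a direct determinantal computation gives $\bF \times \bG = -2i\,x_3\,(x_1+ix_2)^{p-1}(x_1-ix_2)^{q-1}$. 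Inductively, $\tr^r$ on a degree-$n$ polynomial corresponds to $\frac{(n-2r)!}{n!}\Delta^r$ (from iterating $\tr\bS \leftrightarrow \Delta \rp/[n(n-1)]$), and an elementary computation yields $\Delta^r\bigl[(x_1+ix_2)^{p-1}(x_1-ix_2)^{q-1}\bigr] = 4^r \frac{(p-1)!(q-1)!}{(p-r-1)!(q-r-1)!}(x_1+ix_2)^{p-r-1}(x_1-ix_2)^{q-r-1}$. Pulling back with $\phi^*(x_3)=iuv$ and matching $u^{2p-s}v^{2q-s}$ delivers the scalars $2^{-r}$ (even case) and $\kappa(p,q,r)$ (odd case). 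The only delicate point is the odd-case bookkeeping: the sign $(-2i)\cdot i = 2$, the $4^r$ from iterated Laplacians, and the trace normalization $(p+q-1-2r)!/(p+q-1)!$ must combine exactly into $\frac{1}{2^{2r+1}}\frac{(p+q-1)!(p-r-1)!(q-r-1)!}{(p+q-1-2r)!(p-1)!(q-1)!}$; the structural Schur-type reduction itself is automatic once one sees that both sides return non-zero values proportional to $u^{2p-s}v^{2q-s}$ on this test pair.
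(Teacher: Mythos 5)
Your proof is correct, and it is worth noting that the paper itself gives no proof of Proposition~\ref{prop:trad-transvectants}: it only lists the polynomial counterparts of the tensorial operations and defers the derivation to the companion reference [OKDD2018a]. Your argument is therefore a genuinely self-contained alternative. The structure is sound: the Clebsch--Gordan decomposition of $\Sn{2p}\otimes\Sn{2q}$ is multiplicity-free, so an $\SL(2,\CC)$-equivariant bilinear map into $\Sn{2p+2q-2s}$ is unique up to scale, and both sides of each identity are such maps (the right-hand sides because $\symrdots{r}$, $\times$, $\tr$ and $(\cdot)'$ are $\SO(3,\CC)$-equivariant and $\phi^*$ intertwines through $\Ad$). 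Your key vanishing $\phi_1^2+\phi_2^2+\phi_3^2=0$, hence $\phi^*(\bS)=\phi^*(\bS')$, is exactly what makes the harmonic projections on the right harmless. I checked the normalization on your test pair: $\phi_1\pm i\phi_2=u^2,v^2$ gives $\ff=u^{2p}$, $\bg=v^{2q}$ and $\trans{\ff}{\bg}{s}=u^{2p-s}v^{2q-s}$; the collapse of the $\symrdots{r}$ formula to $2^r(x_1+ix_2)^{p-r}(x_1-ix_2)^{q-r}$ via $\sum\binom{r}{k}=2^r$; the determinant $-2i\,x_3\,(x_1+ix_2)^{p-1}(x_1-ix_2)^{q-1}$; the identity $\Delta\bigl[(x_1+ix_2)^a(x_1-ix_2)^b\bigr]=4ab\,(x_1+ix_2)^{a-1}(x_1-ix_2)^{b-1}$; and the trace normalization $(p+q-1-2r)!/(p+q-1)!$. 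These combine to $2^{-r}$ in the even case and precisely to $\kappa(p,q,r)$ in the odd case, and the answer is consistent with the paper's later numerical evaluations (e.g.\ $\phi^{-*}\trans{\ff}{\ff}{6}=\tfrac18\bd_2^{\prime}$ and $M=2^{-25}M_{12}$). What your route buys over a brute-force translation is that a single one-parameter family of highest-weight pairs fixes all constants at once; the only thing I would ask you to state explicitly is that the test-pair values are non-zero exactly in the range $s\le 2\min(p,q)$ where the transvectant itself is non-zero, so no degenerate case is missed.
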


Besides polynomial invariants, one can also define rational invariants for a given representation $\VV$ of a group $G$. These are defined as rational functions on $\VV$, which are invariant under the action of $G$. These functions form a field, the field of rational invariants and is noted $K(\VV)^{G}$. An important result is the following theorem which is a corollary of a more general result due to Popov and Vinberg~\cite[Theorem 3.3]{SPV1994} (see also~\cite[Page 16]{Bri1996}).

\begin{thm}\label{thm:invariant-field}
  Let $\VV$ be a linear representation of $G$, where $G$ is either $\SL(2,\CC)$, $\SO(3,\CC)$ or $\SO(3,\RR)$ and the base field $K$ is either $\RR$ or $\CC$. Then the field of rational invariants $K(\VV)^{G}$ is the field of fractions of the invariant algebra $K[\VV]^{G}$. In other words, any rational invariant $k$ can be written as $P/Q$ where $P$ and $Q$ \emph{belong to $K[\VV]^{G}$}.
\end{thm}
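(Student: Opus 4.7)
The plan is to prove that every rational invariant $f \in K(\VV)^{G}$ can be written as a ratio of two polynomial invariants by exploiting the fact that each of the three groups considered admits no non-trivial rational character.

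First, I write $f = P/Q$ with $P, Q \in K[\VV]$ coprime. This is possible because $K[\VV]$, being a polynomial ring over a field, is a unique factorization domain. For every $g \in G$, the invariance of $f$ gives
\begin{equation*}
  \frac{g \star P}{g \star Q} = g \star f = f = \frac{P}{Q}.
\end{equation*}
Since the action of $g$ on $K[\VV]$ is a $K$-algebra automorphism, it preserves divisibility and coprimality. Comparing two coprime representations of the same rational function in the UFD $K[\VV]$ forces the existence of a scalar $\chi(g) \in K^{*}$ with
\begin{equation*}
  g \star P = \chi(g)\, P, \qquad g \star Q = \chi(g)\, Q.
\end{equation*}

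Next, I verify that $g \mapsto \chi(g)$ is a group homomorphism $G \to K^{*}$. Multiplicativity $\chi(g_{1} g_{2}) = \chi(g_{1})\chi(g_{2})$ follows from applying $(g_{1} g_{2}) \star P = g_{1} \star (g_{2} \star P)$ to the identity above, while the regularity of $\chi$ (algebraic in the complex case, continuous in the real case) is inherited from the regularity of the $G$-action on the finite-dimensional $G$-stable subspace of $K[\VV]$ containing $P$.

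The final step is to observe that none of the groups $\SL(2,\CC)$, $\SO(3,\CC)$, $\SO(3,\RR)$ admits a non-trivial character into $K^{*}$. Indeed, $\SL(2,\CC)$ and $\SO(3,\CC)$ are connected semisimple algebraic groups equal to their commutator subgroups, so any algebraic homomorphism into the abelian group $K^{*}$ must be trivial. Similarly, $\SO(3,\RR)$ is a connected compact simple Lie group, hence perfect, so any continuous homomorphism to $\RR^{*}$ is trivial. Consequently $\chi \equiv 1$, so $P$ and $Q$ themselves belong to $K[\VV]^{G}$, and the equality $f = P/Q$ realizes $K(\VV)^{G}$ as the field of fractions of $K[\VV]^{G}$. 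I expect the main subtlety to lie in the real case $K = \RR$, $G = \SO(3,\RR)$: one must carefully justify the regularity of $\chi$ before invoking perfectness, and it may be cleanest to reduce to the complex case by using that $\SO(3,\RR)$ is Zariski dense in $\SO(3,\CC)$, so that identities satisfied by the real group automatically extend to the complex one. The remainder of the argument is essentially formal, given the Popov--Vinberg framework cited in the statement.
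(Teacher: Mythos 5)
Your proof is correct. The paper itself gives no argument for this theorem: it simply cites it as a corollary of Popov--Vinberg \cite[Theorem 3.3]{SPV1994}, whose proof is exactly the character argument you reproduce (coprime representation in the UFD $K[\VV]$, the resulting character $\chi\colon G\to K^{*}$, and the absence of non-trivial characters for the groups in question). One small simplification: the ``main subtlety'' you flag at the end is not actually an issue, because $\SL(2,\CC)$, $\SO(3,\CC)\cong\mathrm{PSL}(2,\CC)$ and $\SO(3,\RR)\cong\mathrm{PSU}(2)$ are all perfect \emph{as abstract groups}, so any homomorphism into the abelian group $K^{*}$ is trivial without any appeal to the regularity or continuity of $\chi$, and no reduction from the real to the complex case is needed.
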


A finite system of rational invariants $\mathcal{S} = \set{k_{1}, \dotsc ,k_{N}}$ generates the field $K(\VV)^{G}$ if any rational invariant $k \in K(\VV)^{G}$ can be written as a rational expression in $k_{1}, \dotsc ,k_{N}$.

\begin{rem}
  A remarkable fact is that a finite system $\mathcal{S}$ of rational invariants generates the field $K(\VV)^{G}$ if and only if $\mathcal{S}$ is a weak separating set (see~\cite[Lemma 2.1]{SPV1994}.
\end{rem}

Note that Theorem~\ref{thm:invariant-field} allows to translate any generating set of $K(\Sn{8})^{\SL(2,\CC)}$ into a generating set of $K(\HH^{4})^{\SO(3,\RR)}$.

\section{Maeda Invariants}
\label{sec:Maeda-invariants}

A minimal generating set of 9 generators for the invariant algebra of $\Sn{8}$ is known since at least 1880 (see~\cite{vGal1880,Shi1967}). In 1990~\cite[Theorem B]{Mae1990}, Maeda produced a system of 6 rational invariants which generate the invariant field $\CC(\Sn{8})^{\SL(2,\CC)}$.

\begin{thm}[Maeada, 1990]\label{thm:Maeda}
  The invariant field of binary octavics over $\CC$ is generated by the following six algebraic independent rational functions
  \begin{equation*}
    \begin{aligned}%
      I_{2}^M & := \trans{\btheta}{\btheta}{2}/M, \qquad I_{3}^M := \trans{\btheta^{3}}{\bt}{6}/M^{2}, \qquad I_{4}^M := \trans{\btheta^{4}}{\trans{\bt}{\bt}{2}}{8}/M^{3}
      \\
      J_{2}^M & := \trans{\trans{\btheta}{\ff}{1}}{\trans{\bt}{\bt}{2}}{8} \, \trans{\btheta^{6}}{\bj}{12}/M^{6},
      \\
      J_3^M   &
      := \left(36 \trans{\btheta^{2}\ff}{\bj}{12}/M^{2}  - 28 \trans{\trans{\btheta^{2}}{\ff}{3}}{\bt}{6}/{5 M} \right) \, \trans{\btheta^{6}}{\bj}{12}/M^5,
      \\
      J_4^M   & :=2 \trans{\ff \btheta^{3}}{\bt \trans{\bt}{\bt}{2}}{14}/M^{3}
      + 20 \trans{\trans{\ff}{\btheta^{3}}{1}}{\bj}{12}/ {7M^{3}}
      \\
              & \quad   -70 \trans{\trans{\ff}{\btheta^{3}}{4}}{\bt}{6}/99M^{2},
    \end{aligned}
  \end{equation*}
  where $\ff \in \Sn{8}$ is a binary form and
  \begin{gather*}
    \bQ := \trans{\ff}{\ff}{6}, \quad \bt := \trans{\trans{\bQ}{\bQ}{2}}{\bQ}{1}, \quad \btheta := \trans{\ff}{\bt}{6},
    \\
    M := \trans{\bt}{\bt}{6}, \quad \bj := \trans{\trans{\bt}{\bt}{2}}{\bt}{1}.
  \end{gather*}
\end{thm}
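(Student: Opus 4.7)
The plan is to prove Maeda's theorem in two stages: first establish algebraic independence of the six proposed rational invariants, then show they generate the whole invariant field $\CC(\Sn{8})^{\SL(2,\CC)}$.

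\textbf{Stage 1 (Transcendence degree and algebraic independence).} By the Rosenlicht / Popov--Vinberg theorem for connected reductive group actions with generic stabilizer of dimension zero, $\mathrm{trdeg}_{\CC}\CC(\Sn{8})^{\SL(2,\CC)} = \dim \Sn{8} - \dim \SL(2,\CC) = 9 - 3 = 6$. To show $\{I_2^M, I_3^M, I_4^M, J_2^M, J_3^M, J_4^M\}$ are algebraically independent, I would compute the Jacobian matrix of these six functions with respect to the nine coefficients $a_0,\dots,a_8$ of a generic binary octavic $\ff$ and verify that it has rank $6$. In practice this reduces to specialising $\ff$ at a well-chosen point (for instance $\ff = u^8 + v^8 + \sum_i a_i u^{8-i} v^i$ with random rational values of $a_i$) and extracting a nonzero $6\times 6$ minor; the denominators $M^k$ only require that $M \neq 0$ at this point, which is a Zariski-generic condition.

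\textbf{Stage 2 (Generation).} Since the six Maeda invariants are algebraically independent and the transcendence degree of $\CC(\Sn{8})^{\SL(2,\CC)}$ equals $6$, the field extension
\begin{equation*}
  \CC(I_2^M, I_3^M, I_4^M, J_2^M, J_3^M, J_4^M) \subset \CC(\Sn{8})^{\SL(2,\CC)}
\end{equation*}
is algebraic and finite. It remains to show that it has degree $1$. I would pursue two complementary routes. \emph{Route (a), orbit separation}: by the remark following Theorem~\ref{thm:invariant-field}, a finite system of rational invariants generates the invariant field if and only if it is a weak separating set. I would argue that the covariants $\btheta$ and $\bt$ (of small degree) are themselves determined up to the $\SL(2,\CC)$-action by $I_2^M, I_3^M, I_4^M$, because these three scalar invariants capture enough of the Gram-type data between $\btheta$, $\bt$ and their low-order transvectants. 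Once $(\btheta, \bt)$ is fixed, the three ``mixed'' invariants $J_2^M, J_3^M, J_4^M$ involve $\ff$ linearly (through $\trans{\btheta}{\ff}{1}$, $\trans{\btheta^{2}}{\ff}{3}$, $\trans{\ff}{\btheta^{3}}{4}$, etc.) and should pin down the residual coefficients of $\ff$ within a generic orbit. \emph{Route (b), explicit rational expressions}: starting from the classical Shioda integrity basis of $9$ polynomial generators for $\CC[\Sn{8}]^{\SL(2,\CC)}$, express each generator as a rational function of $I_2^M,\dots,J_4^M$ using the transvectant identities satisfied by $\bQ$, $\bt$, $\btheta$, $M$, $\bj$; combined with Theorem~\ref{thm:invariant-field}, this yields the reverse inclusion of fraction fields.

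\textbf{Main obstacle.} Stage 1 is a routine Jacobian check, and computing transcendence degree is classical. The real difficulty lies in Stage 2: proving that the extension has degree $1$ rather than a larger finite degree. Even with algebraic independence in hand, a priori the six invariants might only generate an index-$d$ subfield for some $d > 1$ corresponding to a finite group of ``hidden'' automorphisms. Ruling this out requires either (i) a careful genericity argument showing that the joint level sets of $I_2^M,\dots,J_4^M$ inside a slice transversal to generic orbits consist of exactly one point (no extraneous Galois-like symmetry), or (ii) a genuinely constructive inversion: exhibiting formulas that recover the Shioda generators rationally from the six Maeda invariants. Either route is computationally heavy and is where Maeda's ingenuity is needed; the specific combinations appearing in $J_2^M$, $J_3^M$, $J_4^M$ (with the peculiar coefficients $36/M^2 - 28/5M$, etc.) are precisely the ones that cancel the unwanted automorphisms and make the inversion succeed.
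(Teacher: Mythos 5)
First, a point of reference: the paper does not prove Theorem~\ref{thm:Maeda} at all. It is imported from Maeda~\cite{Mae1990} (Theorem~B there), and the only original content in the paper at this spot is the subsequent Remark correcting a few numerical coefficients in Maeda's formulas. So your attempt cannot be compared against an in-paper argument; it has to stand on its own, and as written it does not. Stage~1 (transcendence degree $9-3=6$ plus a Jacobian rank check at a generic point) is routine and fine. The theorem, however, lives entirely in Stage~2, and there you only describe two possible strategies without carrying either out; you yourself flag the decisive step --- that the algebraic extension $\CC(I_2^M,\dotsc,J_4^M)\subset\CC(\Sn{8})^{\SL(2,\CC)}$ has degree one rather than some $d>1$ --- as ``where Maeda's ingenuity is needed.'' That is an accurate self-assessment, but it means the claim is asserted, not established.

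Moreover, Route~(a) is flawed as stated, not merely incomplete. You claim that $I_2^M, I_3^M, I_4^M$ determine the pair $(\btheta,\bt)$ up to the $\SL(2,\CC)$-action. Here $\btheta\in\Sn{2}$ and $\bt\in\Sn{6}$, so an abstract pair in $\Sn{2}\oplus\Sn{6}$ has $3+7-3=7$ moduli, and three scalar invariants cannot separate generic orbits of a $7$-dimensional quotient. The pair is of course constrained to the image of the covariant map $\ff\mapsto(\btheta(\ff),\bt(\ff))$, but you give no argument that this constraint reduces the count to three, nor that the three ``mixed'' invariants $J_2^M, J_3^M, J_4^M$ then pin down $\ff$ within its orbit (the dependence on $\ff$ through $\btheta$ and $\bt$ is highly nonlinear, so ``involve $\ff$ linearly'' is not accurate either). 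Route~(b) is a description of the desired output (rational inversion of the Shioda generators), not a derivation. Maeda's actual proof works by using the covariants $\bt$ and $\btheta$ to put a generic octavic into a normal form --- effectively constructing a rational section of the quotient map --- and then exhibiting the six functions as coordinates on the resulting slice; none of that mechanism appears in your outline. To repair the proposal you would either have to reproduce such a section argument or actually exhibit the rational formulas expressing the nine Shioda generators in terms of the six Maeda invariants.
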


\begin{rem}
  We found a few minor numerical errors in~\cite{Mae1990} and did the following corrections, which were used in Theorem~\ref{thm:Maeda}.
  \begin{itemize}
    \item In~\cite[Lemma 2.10(3)]{Mae1990}, we should read
          \begin{equation*}
            \trans{\bt}{\trans{\bt}{\bt}{2}}{1} = -\bj = \Delta^3\lambda^3/108;
          \end{equation*}
    \item In~\cite[Lemma 2.12]{Mae1990}, we should read
          \begin{equation*}
            \lambda^6 \nabla = -108 \trans{\btheta^6}{\bj}{12}\Delta^3/\lambda^3;
          \end{equation*}
    \item In~\cite[Lemma 2.13]{Mae1990}, we should read
          \begin{align*}
            \lambda J_{2}/\nabla & = 72 \trans{\trans{\btheta}{\ff}{1}}{\trans{\bt}{\bt}{2}}{8}/\Delta \lambda^2,
            \\
            {J_3}/{\nabla}       & = {108}\trans{\btheta^2 \ff}{\bj}{12}/{\lambda^5}-{28}\trans{\trans{\btheta^2}{\ff}{3}}{\bt}{6}/{5\lambda^3},
            \\
            J_4                  & = 54 \trans{\btheta^3 \ff}{\bt \trans{\bt}{\bt}{2}}{14}/ \lambda^6 + 540 \trans{\trans{\ff}{\btheta^3}{1}}{\bj}{12} / 7 \lambda^6
            \\
                                 & \quad  - 70 \trans{\trans{\ff}{\btheta^3}{4}}{\bt}{6} /11 \lambda^4.
          \end{align*}
  \end{itemize}
\end{rem}

Let $\bH \in \HH^{4}$ and $\ff= \phi^{*}\bH$, the corresponding binary form of degree 8, where $\phi^{*}$ has been defined in~Appendix~\ref{sec:rational-invariants}. Using transvectants' translations obtained in Proposition~\ref{prop:trad-transvectants}, we can recast Maeda's invariants of $\ff$ as rational invariants of $\bH$. We get first
\begin{equation*}
  \begin{aligned}
    \phi^{-*} \bQ     & = \phi^{-*} \trans{\ff}{\ff}{6} = \frac{1}{8} \bd_{2}^{\prime},
    \\
    \phi^{-*} \bt     & = \phi^{-*} \trans{\trans{\bQ}{\bQ}{2}}{\bQ}{1} = \frac{1}{2^{11}} \bd_{2}^{\,2}\times \bd_{2} = \frac{1}{2^{11}} \bT_{6},
    \\
    M                 & = \phi^{-*} \trans{\bt}{\bt}{6} = \frac{1}{2^{25}} \norm{\bT_{6}}^{2} = \frac{1}{2^{25}} M_{12},
    \\
    \phi^{-*} \btheta & = \phi^{-*} \trans{\ff}{\bt}{6} = \frac{1}{2^{14}} \ww_{7} = \frac{1}{2^{14}} \bH \3dots \bT_{6},
    \\
    \phi^{-*} \bj     & = \phi^{-*} \trans{\trans{\bt}{\bt}{2}}{\bt}{1} = \frac{1}{2^{35}}\left((\bT_{6} \symrdots{1} \bT_{6})^{\prime} \times \bT_{6}\right)^{\prime} = \frac{1}{2^{35}} \bJ_{18},
  \end{aligned}
\end{equation*}
where $\phi^{-*}$ stands for the inverse of $\phi^{*}$ and where we have used the following observations.
\begin{enumerate}
  \item If $\bH \in \HH^{n}(\RR^{3})$ and $\bq$ is the Euclidean tensor, then,
        \begin{equation*}
          (\odot^{k}\bq) \times \bH  = 0, \qquad \forall k \ge 1,
        \end{equation*}
        where $\odot^{k}\bq$ is the symmetric tensor product of $k$ copies of $\bq$.
  \item If $\bH \in \HH^{n}(\RR^{3})$ and $\ww \in \HH^{1}(\RR^{3})$, then, $\ww \times \bH$ is harmonic.
  \item If $\ba\in \Sym^{2}(\RR^{3})$, then ${\ba}^{2} \times \ba$ is harmonic (see~\cite[Remark 8.2]{OKDD2018a}) and
        \begin{equation*}
          {\ba}^{2} \times \ba = {\ba^{\prime}}^{2} \times \ba^{\prime}.
        \end{equation*}
  \item If $\bT^{1}, \bT^{2} \in \TT^{n}(\RR^{3})$, then, $\bT^{1} \rdots{n} \bT^{2} = \langle \bT^{1}, \bT^{2}\rangle$
        is their scalar product and
        \begin{equation*}
          \langle \bT_{1}, \bT_{2}^{s} \rangle = \langle \bT_{1}^{s}, \bT_{2} \rangle, \qquad \langle \bT_{1}, (\bT_{2}^{s})^{\prime} \rangle = \langle (\bT_{1}^{s})^{\prime}, \bT_{2} \rangle.
        \end{equation*}
\end{enumerate}

We get then
\begin{equation*}
  \phi^{-*}\trans{\btheta^{2}}{\ff}{3} = \frac{5}{6}\tr [ (\ww_{7}\ast\ww_{7})\times \bH] = -\frac{1}{4}\, (\bH\cdot\ww_{7})\times \ww_{7},
\end{equation*}
which is an harmonic third-order tensor, by (2) and the fact that $\bH\cdot\ww_{7}$ is itself harmonic. We have finally the following result, where we have introduced the notation $\ast^{k}\, \ww_{7}$ for the harmonic product of $k$ copies of $\ww_{7}$. We point out, moreover, that the first-order covariant $\ww_{7}$, the third-order covariant $\bT_{6}$ as well as the sixth-order covariant $\bJ_{18}$ are all harmonic.

\begin{thm}\label{thm:Maeda-H4}
  The invariant field of $\HH^{4}(\RR^3)$ is generated by the following six algebraic independent rational functions
  \begin{align*}
    i_{2} & =  \frac{\norm{\ww_{7}}^{2}}{M_{12}},
    \\
    i_{3} & =  \frac{\langle \ast^{3}\, \ww_{7}, \bT_{6}\rangle }{M_{12}^{2}},
    \\
    i_{4} & = \frac{\langle \ast^{4}\, \ww_{7}, \bT_{6}\cdot \bT_{6}\rangle }{M_{12}^{3}}
    \\
    k_{4} & = \frac{1}{5 M_{12}^{3}} \langle \bH \ast (\ast^{3}\, \ww_{7}), \bT_{6}\ast (\bT_{6} \symrdots{1} \bT_{6})^{\prime}\rangle
    + \frac{1}{7 M_{12}^{3}} \langle \bH \times (\ast^{3}\, \ww_{7}), \bJ_{18}\rangle
    \\
          & \quad -\frac{7}{99 M_{12}^{2}} \langle \bH : (\ast^{3}\, \ww_{7}),   \bT_{6}\rangle.
    \\
    k_{8} & =  \frac{\langle \ww_{7} \times \bH, \bT_{6}\cdot \bT_{6}\rangle \, \langle \ast^{6}\, \ww_{7}, \bJ_{18} \rangle }{M_{12}^{6}},
    \\
    k_9   & = \frac{\langle \ast^{6}\, \ww_{7}, \bJ_{18} \rangle }{M_{12}^5} \left( \frac{36}{M_{12}^{2}} \langle (\ast^{2}\, \ww_{7}) \ast \bH, \bJ_{18} \rangle + \frac{28}{5M_{12}}\langle (\bH\cdot\ww_{7})\times\ww_{7}, \bT_{6}\rangle \right),
  \end{align*}
  where $\bH\in \HH^{4}(\RR^{3})$ is the harmonic tensor, and
  \begin{align*}
     & \bT_{6} := {\bd_{2}}^{2}\times  \bd_{2}, &  & M_{12}  := \norm{{\bd_{2}}^{2} \times \bd_{2}}^{2},
    \\
     & \ww_{7} := \bH \3dots \bT_{6},           &  & \bJ_{18} := (\bT_{6} \symrdots{1} \bT_{6})^{\prime} \times \bT_{6}.
  \end{align*}
\end{thm}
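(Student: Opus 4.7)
The proof will proceed by transferring Maeda's result from binary octavics to harmonic tensors of degree $4$ via the Cartan map. First I would invoke the $\SL(2,\CC)$-equivariant isomorphism $\phi^{*} : \Hn{4}(\CC^{3}) \to \Sn{8}$ recalled in Appendix~\ref{sec:rational-invariants}. By equivariance, $\phi^{*}$ induces a graded isomorphism of the invariant algebras $\CC[\Hn{4}(\CC^{3})]^{\SO(3,\CC)}$ and $\CC[\Sn{8}]^{\SL(2,\CC)}$, and therefore (by Theorem~\ref{thm:invariant-field}) of their fields of fractions. By Theorem~\ref{thm:Maeda}, the right-hand field is generated by the six algebraically independent rationals $I_{2}^{M}, I_{3}^{M}, I_{4}^{M}, J_{2}^{M}, J_{3}^{M}, J_{4}^{M}$; hence their pullbacks under $\phi^{-*}$ generate $\CC(\HH^{4}(\CC^{3}))^{\SO(3,\CC)}$.

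The main task is then to rewrite each Maeda generator as a rational function of $\bH$ via Proposition~\ref{prop:trad-transvectants}. I would first treat the building blocks $\bQ = \trans{\ff}{\ff}{6}$, $\bt = \trans{\trans{\bQ}{\bQ}{2}}{\bQ}{1}$, $\btheta = \trans{\ff}{\bt}{6}$, $M = \trans{\bt}{\bt}{6}$ and $\bj = \trans{\trans{\bt}{\bt}{2}}{\bt}{1}$, applying~\eqref{eq:even-order-transvectant} and~\eqref{eq:odd-order-transvectant} together with the $\kappa(p,q,r)$ normalisation constants. This yields the intermediate identities displayed just before the theorem (with $\bT_{6}$, $\ww_{7}$, $\bJ_{18}$ as specified), using observation~(3) that ${\bd_{2}}^{2}\times\bd_{2}$ is already harmonic, observation~(2) that $\ww_{7}\times\bH$ is harmonic, and observation~(1) to discard spherical tensor-product terms inside the leading-harmonic projections.

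Next I would substitute these expressions into the six Maeda invariants one at a time. The invariants $I_{2}^{M}$, $I_{3}^{M}$, $I_{4}^{M}$ translate straightforwardly, producing $i_{2}$, $i_{3}$, $i_{4}$ via the self-pairing identity of observation~(4) (which ensures $\langle (\bT^{s})', \bT\rangle = \langle \bT, \bT^{s}\rangle$ so that total symmetrisations and harmonic projections can be moved across the scalar products). For $J_{2}^{M}$ and $J_{3}^{M}$, the intermediate identity $\phi^{-*}\trans{\btheta^{2}}{\ff}{3} = -\tfrac{1}{4}(\bH\cdot\ww_{7})\times\ww_{7}$ noted in the excerpt (which itself relies on (2) and the harmonicity of $\bH\cdot\ww_{7}$) does the bulk of the work; I would then group powers of $M_{12}$ to reach $k_{8}$ and $k_{9}$. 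The invariant $k_{4}$, coming from $J_{4}^{M}$, combines three different transvectants of orders $14$, $12$, $6$ with three different harmonic partners ($\bT_{6}\ast(\bT_{6}\symrdots{1}\bT_{6})'$, $\bJ_{18}$, and $\bT_{6}$), and is the computational heart of the proof—this is where keeping track of the prefactors $\kappa$ and the symmetrisation identities of observation~(4) is most delicate.

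Finally I would descend to the real field: since every operation used (symmetric products, contractions, generalized cross product, harmonic products, traces) preserves reality, each generator is a real rational function of the real tensor $\bH$. By Theorem~\ref{thm:invariant-field}, $\RR(\HH^{4}(\RR^{3}))^{\SO(3,\RR)}$ is the field of fractions of $\RR[\HH^{4}(\RR^{3})]^{\SO(3,\RR)}$, and its complexification coincides with $\CC(\HH^{4}(\CC^{3}))^{\SO(3,\CC)}$; hence the six real rationals $i_{2}, i_{3}, i_{4}, k_{4}, k_{8}, k_{9}$ also generate the real field, and their algebraic independence is inherited from Maeda's. The anticipated obstacle is not conceptual but computational: the bookkeeping for $k_{4}$, where one has to identify which pieces of $J_{4}^{M}$ land on the leading harmonic part versus a lower-order trace, and the verification of the minor numerical corrections to~\cite{Mae1990} listed in the remark preceding the theorem.
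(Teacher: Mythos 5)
Your proposal follows essentially the same route as the paper: transfer Maeda's six generators through the equivariant Cartan-map isomorphism $\phi^{*}$ between $\Hn{4}(\CC^{3})$ and $\Sn{8}$, translate each transvectant into a tensor operation via Proposition~\ref{prop:trad-transvectants} together with the harmonicity observations (1)--(4), and pass to the real invariant field via Theorem~\ref{thm:invariant-field}. The paper's own argument is exactly this computation, carried out in the displayed identities for $\bQ$, $\bt$, $M$, $\btheta$, $\bj$ and $\trans{\btheta^{2}}{\ff}{3}$ preceding the theorem, so your plan is correct and matches it.
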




\begin{thebibliography}{10}

\bibitem{AS1983}
M.~Abud and G.~Sartori.
\newblock {T}he geometry of spontaneous symmetry breaking.
\newblock {\em Ann. Physics}, 150(2):307--372, 1983.

\bibitem{AH2012}
K.~Atkinson and W.~Han.
\newblock {\em Spherical harmonics and approximations on the unit sphere: an
  introduction}, volume 2044 of {\em Lecture Notes in Mathematics}.
\newblock Springer, Heidelberg, 2012.

\bibitem{ABR2001}
S.~Axler, P.~Bourdon, and W.~Ramey.
\newblock {\em Harmonic function theory}, volume 137 of {\em Graduate Texts in
  Mathematics}.
\newblock Springer-Verlag, New York, second edition, 2001.

\bibitem{Bac1970}
G.~Backus.
\newblock {A} geometrical picture of anisotropic elastic tensors.
\newblock {\em Rev. Geophys.}, 8(3):633--671, 1970.

\bibitem{Bae1993}
R.~Baerheim.
\newblock {H}armonic decomposition of the anisotropic elasticity tensor.
\newblock {\em Quart. J. Mech. Appl. Math.}, 46(3):391--418, 1993.

\bibitem{Bet1982}
J.~Betten.
\newblock Integrity basis for a second-order and a fourth-order tensor.
\newblock {\em Internat. J. Math. Math. Sci.}, 5(1):87--96, 1982.

\bibitem{Bet1987}
J.~Betten.
\newblock Irreducible invariants of fourth-order tensors.
\newblock {\em Math. Modelling}, 8:29--33, 1987.
\newblock Mathematical modelling in science and technology (Berkeley, Calif.,
  1985).

\bibitem{Boe1978}
J.-P. Boehler.
\newblock {L}ois de comportement anisotrope des milieux continus.
\newblock {\em J. M\'{e}canique}, 17(2):153--190, 1978.

\bibitem{Boe1987a}
J.-P. Boehler.
\newblock {\em Application of tensor functions in solid mechanics}.
\newblock CISM Courses and Lectures. Springer-Verlag, Wien, 1987.

\bibitem{Boe1987}
J.-P. Boehler.
\newblock Introduction to the invariant formulation of anisotropic constitutive
  equations.
\newblock In {\em Applications of tensor functions in solid mechanics}, volume
  292 of {\em CISM Courses and Lectures}, pages 13--30. Springer, Vienna, 1987.

\bibitem{BKO1994}
J.-P. Boehler, A.~A. Kirillov, Jr., and E.~T. Onat.
\newblock On the polynomial invariants of the elasticity tensor.
\newblock {\em J. Elasticity}, 34(2):97--110, 1994.

\bibitem{BBS2007}
A.~B\'{o}na, I.~Bucataru, and M.~A. Slawinski.
\newblock {C}oordinate-free characterization of the symmetry classes of
  elasticity tensors.
\newblock {\em J. Elasticity}, 87(2 - 3):109--132, 2007.

\bibitem{BBS2008}
A.~B\'{o}na, I.~Bucataru, and M.~A. Slawinski.
\newblock {S}pace of ${SO}(3)$-orbits of elasticity tensors.
\newblock {\em Arch. Mech. (Arch. Mech. Stos.)}, 60(2):123--138, 2008.

\bibitem{Bri1996}
M.~Brion.
\newblock {I}nvariants et covariants des groupes alg\'{e}briques r\'{e}ductifs.
\newblock Lecture notes from a summer school in Monastir (Tunisia) in summer
  1996., Juillet 1996.

\bibitem{CT2005}
R.~Caron and T.~Traynor.
\newblock The zero set of a polynomial.
\newblock Technical report, Windsor, ON Canada, 2005.
\newblock Technical Report WSMR 05-03.

\bibitem{Cow1989}
S.~Cowin.
\newblock Properties of the anisotropic elasticity tensor.
\newblock {\em Q. J. Mech. Appl. Math.}, 42:249--266, 1989.

\bibitem{DK2015}
H.~Derksen and G.~Kemper.
\newblock {\em Computational invariant theory}, volume 130 of {\em
  Encyclopaedia of Mathematical Sciences}.
\newblock Springer, Heidelberg, enlarged edition, 2015.
\newblock With two appendices by Vladimir L. Popov, and an addendum by Norbert
  A'Campo and Popov, Invariant Theory and Algebraic Transformation Groups,
  VIII.

\bibitem{Dix1990}
J.~Dixmier.
\newblock {Q}uelques aspects de la th\'{e}orie des invariants.
\newblock {\em Gaz. Math.}, (43):39--64, 1990.
\newblock Translated by J.-R. Billuard.

\bibitem{Duf2008}
E.~S. Dufresne.
\newblock {\em Separating Invariants}.
\newblock PhD thesis, Queen's University, Kingston, Ontario, Canada, Aug. 2008.

\bibitem{GHP2018}
P.~G{\"{o}}rlach, E.~Hubert, and T.~Papadopoulo.
\newblock Rational invariants of even ternary forms under the orthogonal group.
\newblock {\em ArXiv e-prints}, 2018.

\bibitem{Har1977}
R.~Hartshorne.
\newblock {\em Algebraic geometry}.
\newblock Springer-Verlag, New York-Heidelberg, 1977.
\newblock Graduate Texts in Mathematics, No. 52.

\bibitem{Hil1993}
D.~Hilbert.
\newblock {\em {T}heory of algebraic invariants}.
\newblock Cambridge University Press, Cambridge, 1993.

\bibitem{HK2007}
E.~Hubert and I.~A. Kogan.
\newblock Rational invariants of a group action. construction and rewriting.
\newblock {\em J. Symbolic Comput.}, 42(1-2):203--217, jan 2007.

\bibitem{Kem2009}
G.~Kemper.
\newblock Separating invariants.
\newblock {\em J. Symbolic Comput.}, 44(9):1212--1222, 2009.

\bibitem{LR2012}
R.~Lercier and C.~Ritzenthaler.
\newblock {H}yperelliptic curves and their invariants: geometric, arithmetic
  and algorithmic aspects.
\newblock {\em J. Algebra}, 372:595--636, 2012.

\bibitem{Mae1990}
T.~Maeda.
\newblock On the invariant field of binary octavics.
\newblock {\em Hiroshima Math. J.}, 20(3):619--632, 1990.

\bibitem{Oli2017}
M.~Olive.
\newblock About {G}ordan's algorithm for binary forms.
\newblock {\em Found. Comput. Math.}, 17(6):1407--1466, 2017.

\bibitem{OA2014}
M.~Olive and N.~Auffray.
\newblock {I}sotropic invariants of a completely symmetric third-order tensor.
\newblock {\em J. Math. Phys.}, 55(9):092901, 2014.

\bibitem{OKA2017}
M.~Olive, B.~Kolev, and N.~Auffray.
\newblock A minimal integrity basis for the elasticity tensor.
\newblock {\em Arch. Ration. Mech. Anal.}, 226(1):1--31, Oct. 2017.

\bibitem{OKDD2018}
M.~Olive, B.~Kolev, B.~Desmorat, and R.~Desmorat.
\newblock Harmonic {F}actorization and {R}econstruction of the {E}lasticity
  {T}ensor.
\newblock {\em J. Elasticity}, 132(1):67--101, 2018.

\bibitem{OKDD2018a}
M.~Olive, B.~Kolev, R.~Desmorat, and B.~Desmorat.
\newblock Characterization of the symmetry class of an elasticity tensor using
  polynomial covariants.
\newblock Available at https://arxiv.org/abs/1807.08996, 2018.

\bibitem{Olv1999}
P.~J. Olver.
\newblock {\em {C}lassical invariant theory}, volume~44 of {\em London
  Mathematical Society Student Texts}.
\newblock Cambridge University Press, Cambridge, 1999.

\bibitem{Ost1998}
N.~I. Ostrosablin.
\newblock {O}n invariants of a fourth-rank tensor of elasticity moduli.
\newblock {\em Sib. Zh. Ind. Mat.}, 1(1):155--163, 1998.

\bibitem{PS2016}
D.~Pinchon and P.~Siohan.
\newblock Angular parametrization of rectangular paraunitary matrices.
\newblock hal-01289570, 2016.

\bibitem{Shi1967}
T.~Shioda.
\newblock {O}n the graded ring of invariants of binary octavics.
\newblock {\em Amer. J. Math.}, 89:1022--1046, 1967.

\bibitem{Smi1971}
G.~Smith.
\newblock {O}n isotropic functions of symmetric tensors, skew-symmetric tensors
  and vectors.
\newblock {\em Int. J. Eng. Sci.}, 9:899--916, 1971.

\bibitem{Smi1965}
G.~F. Smith.
\newblock On isotropic integrity bases.
\newblock {\em Arch. Ration. Mech. Anal.}, 18:282--292, 1965.

\bibitem{SB1997}
G.~F. Smith and G.~Bao.
\newblock Isotropic invariants of traceless symmetric tensors of orders three
  and four.
\newblock {\em Int. J. Eng. Sci.,}, 35(15):1457--1462, 1997.

\bibitem{SR1958/1959}
A.~J.~M. Spencer and R.~S. Rivlin.
\newblock Finite integrity bases for five or fewer symmetric {$3\times 3$}\
  matrices.
\newblock {\em Arch. Ration. Mech. Anal.}, 2:435--446, 1958.

\bibitem{SPV1994}
T.~A. Springer, V.~L. Popov, and E.~B. Vinberg.
\newblock {\em Algebraic Geometry IV: Linear Algebraic Groups, Invariant Theory
  (Encyclopaedia of Mathematical Sciences)}.
\newblock Springer, 1994.

\bibitem{Stu2008}
B.~Sturmfels.
\newblock {\em Algorithms in Invariant Theory}.
\newblock Texts \& Monographs in Symbolic Computation. 2\textsuperscript{nd}
  edition, Springer Wien New-York, 2008.

\bibitem{vGal1880}
F.~von Gall.
\newblock Ueber das vollst\"andige {S}ystem einer bin\"aren {F}orm achter
  {O}rdnung.
\newblock {\em Math. Ann.}, 17(1):139--152, 1880.

\bibitem{Wan1969a}
C.~C. Wang.
\newblock On representations for isotropic functions. {I}. {I}sotropic
  functions of symmetric tensors and vectors.
\newblock {\em Arch. Ration. Mech. Anal.}, 33:249--267, 1969.

\bibitem{Wan1969b}
C.~C. Wang.
\newblock On representations for isotropic functions. {II}. {I}sotropic
  functions of skew-symmetric tensors, symmetric tensors, and vectors.
\newblock {\em Arch. Ration. Mech. Anal.}, 33:268--287, 1969.

\bibitem{Wan1970a}
C.-C. Wang.
\newblock {C}orrigendum to my recent papers on {R}epresentations for isotropic
  functions.
\newblock {\em Arch. Ration. Mech. Anal.}, 43:392--395, 1970.

\bibitem{Wan1970b}
C.~C. Wang.
\newblock A new representation theorem for isotropic functions: {A}n answer to
  {P}rofessor {G}. {F}. {S}mith's criticism of my papers on representations for
  isotropic functions. {II}. {V}ector-valued isotropic functions, symmetric ten
  tensor-valued isotropic functions, and skew-symmetric tensor-valued isotropic
  functions.
\newblock {\em Arch. Ration. Mech. Anal.}, 36:198--223, 1970.

\bibitem{Wey1939}
H.~Weyl.
\newblock {\em The {C}lassical {G}roups. {T}heir {I}nvariants and
  {R}epresentations}.
\newblock Princeton University Press, Princeton, N.J., 1939.

\bibitem{WP1964}
A.~Wineman and A.~Pipkin.
\newblock Material symmetry restrictions on constitutive equations.
\newblock {\em Arch. Ration. Mech. Anal.}, 17:184--214, 1964.

\bibitem{You1898/99}
A.~Young.
\newblock The {I}rreducible {C}oncomitants of any {N}umber of {B}inary
  {Q}uartics.
\newblock {\em Proc. Lond. Math. Soc.}, 30:290--307, 1898/99.

\bibitem{Zhe1994}
Q.-S. Zheng.
\newblock {T}heory of representations for tensor functions - {A} unified
  invariant approach to constitutive equations.
\newblock {\em Appl. Mech. Rev.}, 47:545--587, 1994.

\end{thebibliography}
\end{document}